\newtheorem{theorem}{Theorem}[section]
\newtheorem{lemma}[theorem]{Lemma}
\newtheorem{prop}[theorem]{Proposition}
\newtheorem{cor}[theorem]{Corollary}
\newtheorem*{prop*}{Proposition}
\newtheorem*{lemma*}{Lemma}
\newtheorem*{claim*}{Claim}
\theoremstyle{definition}
\newtheorem{example}[theorem]{Example}
\newtheorem{remark}[theorem]{Remark}
\theoremstyle{remark}
\numberwithin{equation}{section}
\def\cstarlambda{C^\ast(\Lambda)}
\def\cstar{C^\ast}
\def\ZL0{{\Z^{\Lambda^0}}}
\def\Lambdablue{\Lambda^{e_1}}
\def\Lambdared{\Lambda^{e_2}}
\def\C{\mathbb{C}}
\def\Z{\mathbb{Z}}
\def\N{\mathbb{N}}
\def\T{\mathbb{T}}
\def\O{\mathcal{O}}
\newcommand{\id}{\operatorname{id}}
\newcommand{\per}{\operatorname{per}}
\newcommand{\Zmod}[1]{\Z/#1\Z}
\newcommand{\trace}{\operatorname{trace}}
\newcommand{\Ind}{\operatorname{Ind}}
\newcommand{\Aut}{\operatorname{Aut}}
\newcommand{\Perm}{\operatorname{Perm}}\newcommand{\Prim}{\operatorname{Prim}}
\newcommand{\sixdominovtx}[7]{\beginpicture \setcoordinatesystem units <#1cm,#1cm>
\setplotarea x from 0 to 6, y from 0 to 1 \setlinear \plot 0 0 6 0 /
\plot 0 0 0 1 / \plot 0 1 6 1 / \plot 1 0 1 1 / \plot 2 0 2 1 /
\plot 3 0 3 1 / \plot 4 0 4 1 / \plot 5 0 5 1 / \plot 6 0 6 1 /
\put{#2} at 0.5 0.5 \put{#3} at 1.5 0.5 \put{#4} at 2.5 0.5 \put{#5}
at 3.5 0.5 \put{#6} at 4.5 0.5 \put{#7} at 5.5 0.5 \endpicture}
\newcommand{\twodomino}[3]{\beginpicture \setcoordinatesystem units <#1cm,#1cm>
\setplotarea x from 0 to 2, y from 0 to 1 \setlinear \plot 0 0 2 0 /
\plot 0 1 2 1 / \plot 0 0 0 1 / \plot 1 0 1 1 / \plot 2 0 2 1 /
\put{#2} at 0.5 0.5 \put{#3} at 1.5 0.5 \endpicture}
\newcommand{\threedomino}[4]{\beginpicture \setcoordinatesystem units <#1cm,#1cm>
\setplotarea x from 0 to 3, y from 0 to 1 \setlinear \plot 0 0 3 0 /
\plot 0 1 3 1 / \plot 0 0 0 1 / \plot 1 0 1 1 / \plot 2 0 2 1 /
\plot 3 0 3 1 / \put{#2} at 0.5 0.5 \put{#3} at 1.5 0.5 \put{#4} at
2.5 0.5
\endpicture}
\begin{document}
\title[Periodic $2$-graphs]{\boldmath{Periodic $2$-graphs arising from subshifts}}
\author[David Pask]{David Pask}
\address{David Pask, School  of Mathematics and Applied Statistics, University of Wollongong, NSW 2522, Australia}
\email{dpask@uow.edu.au}
\author[Iain Raeburn]{Iain~Raeburn}
\address{Iain Raeburn, School  of Mathematics and Applied Statistics, University of Wollongong, NSW 2522, Australia}
\email{raeburn@uow.edu.au}
\author[Natasha A. Weaver]{Natasha A. Weaver}
\address{Natasha A. Weaver, School of Mathematical and Physical Sciences,
University of Newcastle, NSW 2308, Australia}
\email{Tash.Weaver@gmail.com}

\thanks{This research was supported by the Australian Research Council, and Natasha Weaver was supported by an Australian Postgraduate
Award.}

\begin{abstract}
Higher-rank graphs were introduced by Kumjian and Pask to provide models for higher-rank Cuntz-Krieger algebras. In a previous paper, we constructed $2$-graphs whose path spaces are rank-two subshifts of finite type, and showed that this construction yields aperiodic $2$-graphs whose $C^*$-algebras are simple and are not ordinary graph algebras.
Here we show that the construction  also gives a family of periodic $2$-graphs which we call \emph{domino
graphs}. We investigate the combinatorial structure of domino graphs, finding interesting points of contact with the existing combinatorial literature, and prove a structure theorem for the $C^*$-algebras of domino graphs.
\end{abstract}
\date{\today}

\maketitle


\section{Introduction}

Higher-rank graphs (or $k$-graphs) are combinatorial objects which are higher-di\-men\-sional analogues of directed graphs. They were  invented by Kumjian and Pask \cite{KP} to provide combinatorial models for a family of higher-rank Cuntz-Krieger algebras studied by Robertson and Steger \cite{RS2}. We now know that many important $C^*$-algebras can be realised as the $C^*$-algebras of higher-rank graphs, and there is a good deal of interest in understanding different classes of higher-rank graphs (see, for example, \cite{PRRS,DPY,SZ}).

Here we are interested primarily in $2$-graphs. Intuitively, a $2$-graph is a directed graph $\Lambda:=(\Lambda^0,\Lambda^1,r,s)$ in which the set $\Lambda^1$ is partitioned into a set $\Lambdablue$ of blue edges and set $\Lambdared$ of red edges, together with a set $C$ of \emph{commuting squares}
\begin{equation}
\label{samplesq}\xygraph{{\bullet}="v11":@{-->}[d]{\bullet}="v10"^h:[l]{\bullet}="v00"^g"v11":[l]{\bullet}="v01"_f:@{-->}"v00"_e}
\end{equation}
in which every blue-red path $gh$ and every red-blue path $ef$ occur exactly once. We view $C$ as defining an equivalence relation on the path space $\Lambda^*$ which allows each path with $i$ blue edges and $j$ red edges to be rewritten in any chosen order of blue and red edges. If the square \eqref{samplesq} belongs to $C$, for example, then $ef=gh$ as paths in $\Lambda$.

In \cite{PRW}, we constructed a family of $2$-graphs whose infinite path spaces are rank-two subshifts of finite type, as studied by Schmidt \cite{Schmidt}. We found conditions which ensure that the $C^*$-algebras of these $2$-graphs are simple and purely infinite, and used results of Evans \cite{E} to compute their $K$-theory in a wide range of examples. Here we consider a family of $2$-graphs which we call \emph{domino graphs}. They are also built using the construction in \cite{PRW}, but their $C^*$-algebras are definitely not simple. We find interesting connections with known combinatorial objects, such as necklaces and Lyndon words, and we analyse the structure of their $C^*$-algebras. 

Our analysis uses two main operator-algebraic tools. We realise domino graphs as crossed products of an ordinary directed graph (or more strictly speaking, of the $1$-graph it defines) by an action of $\Z$, as studied in recent work of Farthing, Pask and Sims \cite{FPS}; the results in \cite{FPS} then imply that the $C^*$-algebra of a domino graph is the crossed product of an ordinary graph algebra by an action of $\Z$. Next, we observe that this action of $\Z$ on the graph algebra has large isotropy, and use a theorem of Olesen and Pedersen \cite{OP} to realise the crossed product as an induced $C^*$-algebra associated to a simple crossed product. This part of the  analysis may be of some independent interest: we provide a new version of the Olesen-Pedersen theorem which gives a very specific isomorphism and which is easier to apply.

We begin in \S\ref{s_2graphs} with a quick review of $k$-graphs, emphasising the connection to the intuitive description of $2$-graphs given above. Then in  \S\ref{sec-cps}, we discuss the general facts about crossed-product $C^*$-algebras which we need, and describe how they apply to the crossed-product graphs of \cite{FPS}.  In \S\ref{s_dominographs}, we review the construction of \cite{PRW}, as it applies to dominos, and then analyse the structure of the resulting domino graphs using ideas from combinatorics which we discuss in an appendix. The main result (Proposition~\ref{prop_lambdaisacrossedproduct}) says that most domino graphs are crossed product $2$-graphs of the sort studied in \cite{FPS}. In \S\ref{S_c*-algebras}, we combine the results of the preceding two sections to prove our structure theorem for the $C^*$-algebras of domino graphs (Theorem~\ref{thm_cstaralg}). In the last section, we compute the $K$-theory of domino-graph algebras.

\section{$k$-graphs and their $C^*$-algebras}\label{s_2graphs}

A \emph{$k$-graph} is a pair $(\Lambda,d)$ consisting of a countable
category $\Lambda$ and a functor $d:\Lambda\to\mathbb{N}^k$, called
the \emph{degree map}, satisfying the \emph{factorisation property}:
for every $\lambda\in\Lambda$ and $m,n\in\N^k$ with
$d(\lambda)=m+n$, there exist unique elements $\mu,\nu\in\Lambda$
such that $d(\mu)=m$, $d(\nu)=n$ and $\lambda$ is the composition
$\mu\nu$. We write $\Lambda^0$ for the set of objects, $\Lambda$ for
the set of morphisms, and $s,r:\Lambda\to \Lambda^0$ for the domain
and codomain maps, so that $\lambda$ and $\mu$ are composable
exactly when $s(\lambda)=r(\mu)$. The factorisation property implies
that for every $v\in \Lambda^0$,  there is a unique morphism $\mu$
such that $s(\mu)=r(\mu)$ and $d(\mu)=0$, namely the identity
morphism $\iota_v$ at $v$. We use $v\mapsto \iota_v$ to identify
$\Lambda^0$ with a subset of $\Lambda$. For $n\in \N^k$, we define
$\Lambda^n:=\{\lambda\in \Lambda:d(\lambda)=n\}$, and refer to an
element $\lambda$ of $\Lambda^n$ as a \emph{path of degree $n$ from
$s(\lambda)$ to $r(\lambda)$}.

In this paper, we are concerned primarily with $2$-graphs, and not much will be missed by a reader who assumes $k=2$ throughout. We visualise a $2$-graph $\Lambda$ using its \emph{skeleton}, which is the directed bicoloured graph
with vertex set $\Lambda^0$ and edge set $\Lambdablue\cup\Lambdared$, where the elements $\beta$ of
$\Lambda^{e_1}$ are blue edges from $s(\beta)\in
\Lambda^0$ to $r(\beta)\in \Lambda^0$, and the elements of
$\Lambda^{e_2}$ are red edges. (In print, black curves 
represent blue edges and dashed curves represent red edges.) The skeleton thus consists of two directed graphs $B\Lambda:=(\Lambda^0,\Lambdablue,r,s)$ and $R\Lambda:=(\Lambda^0,\Lambdared,r,s)$ with the same vertex set and different coloured edges.

The factorisation property in a $2$-graph $\Lambda$ is completely determined by the factorisations of paths of degree $(1,1)$: writing $(1,1)=e_1+e_2$ and $(1,1)=e_2+e_1$ gives
a bijection between the blue-red paths of length $2$ and the
red-blue paths of length $2$. We visualise a path of degree
$(1,1)$ as a commuting square
\begin{equation}
\label{exsquare}
\xygraph{{\bullet}="v11":@{-->}[d]{\bullet}="v10"^h:[l]{\bullet}="v00"^g
"v11":[l]{\bullet}="v01"_f:@{-->}"v00"_e}
\end{equation}
in which the bijection matches up the blue-red path $gh$ with the
red-blue path $ef$, so that $gh=ef$ are the two factorisations of
the path of degree $(1,1)$. Then the factorisation property is determined by a family $C$ of commuting squares in which each red-blue and each blue-red path occurs exactly once (see \cite[\S6]{KP}). We then view paths of degree $(3,2)$ from $w$ to $v$, for example, as
copies of the rectangle
\begin{equation*}
\centerline{
\xygraph{
{q}="v43":[l]{\bullet}="v33":[l]{\bullet}="v23":[l]{\bullet}="v13"
"v43":@{-->}[d]{\bullet}="v42":@{-->}[d]{\bullet}="v41"
"v33":@{-->}[d]{\bullet}="v32":@{-->}[d]{\bullet}="v31"
"v23":@{-->}[d]{\bullet}="v22":@{-->}[d]{\bullet}="v21"
"v13":@{-->}[d]{\bullet}="v12":@{-->}[d]{p}="v11"
"v43":"v33":"v23":"v13"
"v42":"v32":"v22":"v12"
"v41":"v31":"v21":"v11"
}}
\end{equation*}
pasted round the blue-red graph, in such a way that $q$ lands on $w$, $p$ lands
on $v$, and each constituent square belongs to $C$. Composing two paths involves adding squares from $C$ to fill out a larger rectangle, and \cite[\S6]{KP} says there is only one way to do this.

Suppose that $\Lambda$ is a $k$-graph in which every vertex receives paths of all degrees. The $C^*$-algebra of $\Lambda$ is the universal $C^*$-algebra $C^*(\Lambda)$ generated by  partial isometries $\{s_\lambda:\lambda\in \Lambda\}$ satisfying
\begin{itemize}
\item[(a)] $\{s_v:v\in \Lambda^0\}$ are mutually orthogonal projections,
\smallskip
\item[(b)] $s_\lambda s_\mu=s_{\lambda\mu}$ whenever $s(\lambda)=r(\mu)$ ,
\smallskip
\item[(c)] $s^*_\lambda s_\lambda=s_{s(\lambda)}$ for every $\lambda\in \Lambda$, and
\smallskip
\item[(d)] $s_v=\sum_{r(\lambda)=v,\ d(\lambda)=n}s_\lambda s_\lambda^*$ for every $v\in \Lambda^0$ and $n\in \N^k$.
\end{itemize}
For information about higher-rank graphs and their $C^*$-algebras, see \cite{KP} or \cite{RSY03}.

\section{Crossed products of graph algebras}\label{sec-cps}

In \cite{FPS}, Farthing, Pask and Sims consider actions $\alpha:\Z^l\to \Aut \Lambda$ of $\Z^l$ on a $k$-graph $\Lambda$ and the induced actions $\tilde \alpha$ of $\Z^l$ on $C^*(\Lambda)$ \cite[Proposition~3.1]{FPS}.  In this section, we establish some general properties of crossed products of the form $C^*(\Lambda)\times_{\tilde\alpha}\Z^l$.

If $\alpha:G\to \Aut A$ is an action of a group on a $C^*$-algebra, then the crossed product $A\times_\alpha G$ is generated by a universal covariant representation $(i_A, i_G)$ in $M(A\times_\alpha G)$. We write $\pi\times U$ for the representation of $A\times_\alpha G$ corresponding to a covariant representation $(\pi,U)$ of $(A,G,\alpha)$.  Here we are only interested in actions of discrete groups, and then the map $i_A$ takes values in $A\times_\alpha G$, and the elements of the form $i_A(a)i_G(s)$ span a dense subspace of $A\times_\alpha G$. When $G$ is abelian, the crossed product carries a canonical dual action $\hat\alpha$ of the dual group $\hat G$, which is characterised\footnote{There is disagreement in the literature about the definition of the dual action: in \cite{W}, for example, $i_G(s)$ would be multiplied by $\overline{\xi(s)}=\xi(s^{-1})$. It does not make a big difference, since $s\mapsto s^{-1}$ is an automorphism of the abelian group $G$, and induces an isomorphism of the two crossed products.} by $\hat\alpha_\xi(i_A(a)i_G(s))=\xi(s)i_A(a)i_G(s)$.

To state our result about crossed products of the form $C^*(\Lambda)\times _{\tilde\alpha}\Z^l$, we need one further concept. Suppose that $L$ is a closed
subgroup of a compact group $K$, and $\alpha:L\to\Aut A$ is a
continuous action. Then
\[\Ind_L^K(A,\alpha)=\{f\in C(K,A):f(gh)=\alpha_h^{-1}(f(g)) \text{ for }g\in K,h\in L\}.\]
is a $C^*$-subalgebra of $C(K,A)$, called an \emph{induced $C^*$-algebra}. (The extra assumption in  \cite[\S6.3]{RW}
that the function $sL\mapsto\|f(s)\|$ vanishes at $\infty$ on $K/L$ is automatic here because $K$ is compact).

\begin{theorem}\label{cpofkgraph}
Suppose that $\alpha$ is an action of $\Z^l$ on a finite $k$-graph
$\Lambda$. Then there are a cofinite
subgroup $N$ of $\Z^l$ and an injective homomorphism $\beta:\Z^l/N\to \Aut\Lambda$ such that $\alpha_m=\beta_{m+N}$. Let $\Phi:C^*(\Lambda)\times_{\tilde\alpha}\Z^l\to \cstar(\Lambda)\times_{\tilde{\beta}}(\Z^l/N)$ be the homomorphism such that $\Phi\circ i_{C^*(\Lambda)}=i_{C^*(\Lambda)}$ and $\Phi(i_{\Z^l}(m))=i_{\Z^l/N}(m+N)$. Then there is an isomorphism $\Psi$ of $C^*(\Lambda)\times_{\tilde\alpha}\Z^l$ onto $\Ind_{N^\perp}^{\T^l}(\cstar(\Lambda)\times_{\tilde{\beta}}(\Z^l/N),\widehat{\tilde{\beta}})$ such that $\Psi(b)(z)=\Phi(\tilde\alpha_z^{-1}(b))$.
\end{theorem}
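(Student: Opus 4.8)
The plan is to manufacture the isomorphism $\Psi$ by hand from a suitable covariant representation, and then to check it is bijective by matching up the spectral subspaces for the dual $\T^l$-actions on the two sides. For the existence of $N$ and $\beta$: since $\Lambda$ is finite, an automorphism of $\Lambda$ is determined by what it does to the finite generating set $\Lambda^0\cup\bigcup_i\Lambda^{e_i}$, so $\Aut\Lambda$ is finite; hence $N:=\ker\alpha$ has finite index in $\Z^l$, i.e.\ is cofinite, and $\alpha$ factors as $\alpha_m=\beta_{m+N}$ for an injective homomorphism $\beta\colon\Z^l/N\to\Aut\Lambda$. Feeding both $\alpha$ and $\beta$ through the functoriality of $\Lambda\mapsto C^*(\Lambda)$ of \cite[Proposition~3.1]{FPS} produces $\tilde\alpha$ and $\tilde\beta$ with $\tilde\alpha_m=\tilde\beta_{m+N}$, so $\tilde\alpha$ is \emph{inflated} from the finite group $\Z^l/N$. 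Writing $B:=C^*(\Lambda)\times_{\tilde\beta}(\Z^l/N)$ (a unital algebra, since $C^*(\Lambda)$ is unital), I identify its dual action $\widehat{\tilde\beta}$ of $\widehat{\Z^l/N}$ with an action of the finite subgroup $N^\perp\subseteq\T^l=\widehat{\Z^l}$ in the standard way, so that $w\in N^\perp$ acts on spanning elements by $\widehat{\tilde\beta}_w\big(i_{C^*(\Lambda)}(a)i_{\Z^l/N}(g)\big)=w(g)\,i_{C^*(\Lambda)}(a)i_{\Z^l/N}(g)$, where $w(g)$ means $w(m)$ for any $m\in g$. Because $\tilde\alpha$ is inflated, $\big(i_{C^*(\Lambda)},\ m\mapsto i_{\Z^l/N}(m+N)\big)$ is a covariant representation of $(C^*(\Lambda),\Z^l,\tilde\alpha)$ in $B$, and $\Phi$ is its integrated form; it is surjective because its range contains the generators of $B$.

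Next I would construct $\Psi$. Set $D:=\Ind_{N^\perp}^{\T^l}(B,\widehat{\tilde\beta})\subseteq C(\T^l,B)$, and define a covariant representation $(\pi,U)$ of $(C^*(\Lambda),\Z^l,\tilde\alpha)$ in $M(D)$ by taking $\pi(a)$ to be pointwise multiplication by the constant function $z\mapsto i_{C^*(\Lambda)}(a)$ and $U_m$ to be pointwise multiplication by $z\mapsto\overline{z(m)}\,i_{\Z^l/N}(m+N)$. The one point that needs checking is that $U_m$ really lies in $M(D)$: this reduces to the identity $\overline{(zw)(m)}\,i_{\Z^l/N}(m+N)=\widehat{\tilde\beta}_w^{-1}\!\big(\overline{z(m)}\,i_{\Z^l/N}(m+N)\big)$ for $w\in N^\perp$, which holds precisely because $\widehat{\tilde\beta}_w$ scales $i_{\Z^l/N}(m+N)$ by $w(m+N)=w(m)$ — that is, the oscillation $\overline{z(m)}$ has been chosen exactly so as to absorb the $N^\perp$-twist. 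Covariance of $(\pi,U)$ is then a one-line consequence of the covariance relation in $B$, so $\Psi:=\pi\times U$ is a homomorphism of $C^*(\Lambda)\times_{\tilde\alpha}\Z^l$ into $M(D)$; evaluating on spanning elements shows $\Psi\big(i_{C^*(\Lambda)}(a)i_{\Z^l}(m)\big)$ is the function $z\mapsto\overline{z(m)}\,i_{C^*(\Lambda)}(a)i_{\Z^l/N}(m+N)$, which lies in $D$, so $\Psi$ takes values in $D$; and the same computation gives $\Psi(b)(z)=\Phi(\widehat{\tilde\alpha}_z^{-1}(b))$ on spanning elements, hence for all $b$ by continuity (here $\widehat{\tilde\alpha}$ is the dual $\T^l$-action, written $\tilde\alpha$ in the statement).

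Finally I would show $\Psi$ is bijective by a spectral-subspace argument. Equip $D$ with the translation action $(\tau_uf)(z)=f(u^{-1}z)$ of $\T^l$, which preserves $D$, and check $\Psi\circ\widehat{\tilde\alpha}_u=\tau_u\circ\Psi$. For each $m\in\Z^l=\widehat{\T^l}$, the $m$-th spectral subspace of $\widehat{\tilde\alpha}$ is $\overline{i_{C^*(\Lambda)}(C^*(\Lambda))\,i_{\Z^l}(m)}$, while a short direct computation identifies the $m$-th spectral subspace of $\tau$ as $\{\,z\mapsto\overline{z(m)}\,b_0 : b_0\in B_{m+N}\,\}$, where $B_{m+N}=\overline{i_{C^*(\Lambda)}(C^*(\Lambda))\,i_{\Z^l/N}(m+N)}$ is the corresponding spectral subspace of $B$ for $\widehat{\tilde\beta}$. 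Since $i_{C^*(\Lambda)}$ is isometric into both crossed products and the elements $i_{\Z^l}(m)$, $i_{\Z^l/N}(m+N)$ are unitary, $\Psi$ restricts to an isometric bijection of the $m$-th spectral subspace of $\widehat{\tilde\alpha}$ onto the $m$-th spectral subspace of $\tau$, for every $m$. As each algebra is the closed span of its spectral subspaces (the acting group $\T^l$ being compact) and $\Psi$ is $\T^l$-equivariant, this immediately gives surjectivity; it also gives injectivity, since $\Psi(b)=0$ forces $\Psi$ of every spectral component $P_mb$ to vanish, hence $P_mb=0$ for all $m$, hence $b=0$.

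The step I expect to need the most care is this last one: pinning down the spectral subspaces of $D$ \emph{exactly} — so that $\Psi$ is seen to be onto and not merely to have dense range — and being careful that $\T^l$-equivariance together with the fact that each algebra is the closed span of its spectral subspaces really does upgrade the fibrewise isometries to bijectivity of $\Psi$. By contrast, the construction of $\Psi$ is essentially forced once one guesses the function-valued unitary $U_m(z)=\overline{z(m)}\,i_{\Z^l/N}(m+N)$, and the verification $U_m\in M(D)$ — the one place where the inflated structure is genuinely used — is a single line once the identification $\widehat{\Z^l/N}\cong N^\perp$ is set up correctly.
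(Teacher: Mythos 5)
Your proposal is correct, and the first half (finiteness of $\Aut\Lambda$ via the embedding into permutations of the edge sets, $N:=\ker\alpha$, and the induced $\beta$) is essentially identical to the paper's. Where you genuinely diverge is in the proof of the Olesen--Pedersen-type isomorphism. The paper first checks that the formula $\Psi(b)(\xi)=\Phi(\hat\beta_\xi^{-1}(b))$ lands in the induced algebra, then proves surjectivity by observing that $\Psi$ is linear over $C(\hat H)=C(\hat G/H^\perp)$ and invoking a partition-of-unity lemma, and proves injectivity by showing that every irreducible representation $\pi\times U$ factors through $\Psi$ (the restriction $U|_H$ is scalar by irreducibility, the resulting character of $H$ extends to $\hat G$ by Pontryagin duality, and one twists $U$ to descend to $G/H$). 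You instead build $\Psi$ as the integrated form of the covariant pair $\bigl(\pi, U_m(z)=\overline{z(m)}\,i_{\Z^l/N}(m+N)\bigr)$ in $M(D)$ and prove bijectivity by matching spectral subspaces: $\Psi$ intertwines the dual action $\widehat{\tilde\alpha}$ with translation $\tau$ on the induced algebra, restricts to an isometric bijection of $\overline{i_{C^*(\Lambda)}(C^*(\Lambda))\,i_{\Z^l}(m)}$ onto $\{z\mapsto\overline{z(m)}\,b_0: b_0\in B_{m+N}\}$ for each $m$, and since both algebras are the closed spans of their spectral subspaces (and the range of a $*$-homomorphism is closed, and $b$ is determined by its Fourier components $P_mb$), bijectivity follows. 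Your identification of the $m$-th spectral subspace of $(D,\tau)$ is correct --- the membership condition $f(zw)=\widehat{\tilde\beta}_w^{-1}(f(z))$ is exactly what forces $f(1)\in B_{m+N}$ --- and the check that $U_m\in M(D)$ is the right place to use the inflated structure. The trade-off: your route is more self-contained and elementary (no representation theory, no partition-of-unity lemma, no appeal to Pontryagin extension of characters), at the cost of being somewhat longer to write out in full; the paper's route is shorter given the cited machinery and has the side benefit of exhibiting explicitly how irreducible representations of the big crossed product arise from those of the small one.
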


Once we have shown the existence of $N$ and $\beta$, we will apply the following general result of Olesen and Pedersen \cite{OP}. 

\begin{theorem}\label{OPind}
Suppose that $H$ is a subgroup of a discrete abelian group $G$,  and $\gamma$ is an action of $G/H$ on a $C^*$-algebra $A$. Let $q:G\to G/H$ be the quotient map, let $\beta:=\gamma\circ q$, and define
$\Phi:A\times_{\beta} G\to A\times_{\gamma} (G/H)$ by
$\Phi:=i_A\times(i_{G/H}\circ q)$. Then $\Psi(b)(\xi)=\Phi(\hat{\beta_\xi}^{-1}(b))$
defines an isomorphism $\Psi$of $A\times_{\beta}G$ onto
$\Ind_{H^\perp}^{\hat{G}}(A\times_\gamma (G/H), \hat{\gamma})$.
\end{theorem}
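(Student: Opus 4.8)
The plan is to verify directly that the explicit formula $\Psi(b)(\xi)=\Phi(\hat\beta_\xi^{-1}(b))$ defines an injective $*$-homomorphism into the induced algebra and that it is surjective; the point is that injectivity will buy isometry (hence closed range) for free, so the only genuinely analytic issue is surjectivity. Throughout I identify $H^\perp$ with $\widehat{G/H}$ via $\eta\leftrightarrow\bar\eta$, where $\bar\eta(q(g))=\eta(g)$ (well defined precisely because $\eta|_H=1$). First I record that $\Phi=i_A\times(i_{G/H}\circ q)$ is a well-defined surjective homomorphism: the pair $(i_A,i_{G/H}\circ q)$ is covariant for $(A,G,\beta)$ since $i_{G/H}(q(g))i_A(a)i_{G/H}(q(g))^*=i_A(\gamma_{q(g)}(a))=i_A(\beta_g(a))$, and its image contains $i_A(A)$ and $i_{G/H}(G/H)$. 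The key elementary observation is that $\Phi$ is \emph{isometric on each spectral subspace} $i_A(A)i_G(g)$: because $i_A$ is faithful and $i_G(g)$, $i_{G/H}(q(g))$ are unitaries in the respective multiplier algebras, $\|i_A(a)i_G(g)\|=\|a\|=\|i_A(a)i_{G/H}(q(g))\|=\|\Phi(i_A(a)i_G(g))\|$. Checking on the generators $i_A(a)i_G(g)$ then gives the intertwining identity $\Phi\circ\hat\beta_\eta=\hat\gamma_\eta\circ\Phi$ for $\eta\in H^\perp$ (the relation $\eta(g)=\bar\eta(q(g))$ is exactly what makes the two sides match), so $\Psi(b)(\xi\eta)=\Phi(\hat\beta_\eta^{-1}\hat\beta_\xi^{-1}(b))=\hat\gamma_\eta^{-1}(\Psi(b)(\xi))$, which is the defining condition of the induced algebra. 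Strong continuity of $\xi\mapsto\hat\beta_\xi^{-1}(b)$ together with boundedness of $\Phi$ shows $\Psi(b)\in C(\hat G,A\times_\gamma(G/H))$, and since the operations on the induced algebra are pointwise, $\Psi$ is a $*$-homomorphism.

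For injectivity I use Fourier coefficients. Given $f$ in the induced algebra $I$, set $c_g=\int_{\hat G}\xi(g)f(\xi)\,d\xi$ against normalised Haar measure on the compact group $\hat G$; when $f=\Psi(b)$ one computes $c_g=\Phi(E_g(b))$, where $E_g=\int_{\hat G}\xi(g)\hat\beta_\xi^{-1}(\cdot)\,d\xi$ is the spectral projection onto $i_A(A)i_G(g)$. If $\Psi(b)=0$ then every $c_g=0$, so $\Phi(E_g(b))=0$, and since $\Phi$ is injective on $i_A(A)i_G(g)$ we get $E_g(b)=0$ for all $g$. Because $G$ is abelian, hence amenable, the canonical conditional expectation $\Phi_0=\int_{\hat G}\hat\beta_\xi(\cdot)\,d\xi$ onto $i_A(A)$ is faithful, so the spectral projections $\{E_g\}$ separate points; hence $b=0$ and $\Psi$ is injective. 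An injective $*$-homomorphism of $C^*$-algebras is isometric, so $\Psi$ is isometric and its range $R:=\Psi(A\times_\beta G)$ is closed. This step deliberately sidesteps any convergence of Fourier series.

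It remains to prove $R=I$, and this is the main obstacle. I view $I=\Ind_{H^\perp}^{\hat G}(A\times_\gamma(G/H),\hat\gamma)$ as the section algebra of the fibre bundle $\hat G\times_{H^\perp}B$ (with $B:=A\times_\gamma(G/H)$) over the compact base $\hat G/H^\perp\cong\hat H$, evaluation at a representative identifying each fibre with $B$. Two facts about $R$ then force $R=I$. First, $R$ is a $C(\hat H)$-submodule: for $g\in H$ one has $\Phi(i_G(g))=1$, so $\Psi(i_G(g)b)(\xi)=\overline{\xi(g)}\,\Psi(b)(\xi)$, i.e.\ left multiplication by $i_G(g)$ implements multiplication of $\Psi(b)$ by the scalar function $\overline{\xi(g)}$; these functions generate $C(\hat H)$ (the $H^\perp$-invariant functions on $\hat G$) and $R$ is closed, whence $C(\hat H)\cdot R\subseteq R$. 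Second, $R$ is fibrewise full: for each $\xi$, $R(\xi)=\{\Phi(\hat\beta_\xi^{-1}(b)):b\in A\times_\beta G\}=\Phi(A\times_\beta G)=B$. A closed $C(\hat H)$-submodule of the section algebra of a bundle over a compact space which exhausts every fibre must be the whole section algebra, by a partition-of-unity (local--global) argument; hence $R=I$.

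Combining the three steps, $\Psi$ is an isometric $*$-isomorphism of $A\times_\beta G$ onto $\Ind_{H^\perp}^{\hat G}(A\times_\gamma(G/H),\hat\gamma)$. The delicate point is the surjectivity: the naive inverse $f\mapsto\sum_g\Phi^{-1}(c_g)$ need not converge in norm, so rather than invert $\Psi$ by hand I extract isometry from injectivity and then close the gap with the fibrewise/local--global principle. The essential inputs that make everything run are the two properties of $\Phi$ isolated above, namely that it is isometric on each spectral subspace (giving injectivity of $\Psi$) and fibrewise surjective (giving fullness of $R$).
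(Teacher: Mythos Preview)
Your proof is correct, and the surjectivity half is essentially the paper's: both arguments observe that $\Psi$ is $C(\hat H)$-linear (via multiplication by $i_G(h)$ for $h\in H$), that $\Psi$ hits every fibre fully (because $\Phi$ is onto), and then invoke a partition-of-unity/local--global argument over the compact base $\hat G/H^\perp$.

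The injectivity arguments genuinely differ. The paper shows that every irreducible representation $\pi\times U$ of $A\times_\beta G$ factors through $\Psi$: since $U|_H$ lands in the commutant, Schur's lemma forces $U_h=\theta(h)1$ for some $\theta\in\hat H$, one extends $\theta$ to $\chi\in\hat G$ by Pontryagin duality, and then $(\pi\times U)(b)=(\pi\times V)(\Psi(b)(\chi))$ for the induced representation $V$ of $G/H$. Your route is Fourier-analytic: you exploit that $\Phi$ is isometric on each spectral subspace $i_A(A)i_G(g)$ of the dual action, so the Fourier coefficients $c_g=\Phi(E_g(b))$ of $\Psi(b)$ determine every $E_g(b)$, and then invoke faithfulness of the conditional expectation $E_0$ (amenability of abelian $G$) to conclude that the family $\{E_g\}$ separates points. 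Both are short; the paper's argument is representation-theoretic and avoids any appeal to amenability or conditional expectations, while yours stays inside the crossed-product structure and makes the role of the spectral decomposition explicit. One small point worth tightening: the implication ``$E_0$ faithful $\Rightarrow$ $\{E_g\}$ separates points'' is standard but not quite immediate---it uses $E_0(b^*b)=0$ once you know $E_0(cb)=0$ for a dense set of $c$, which follows from $E_g(i_G(s)b)=i_G(s)E_{g-s}(b)$ and the module property of $E_0$.
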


With some effort, Theorem~\ref{OPind} can be deduced from \cite[Corollary~2.5]{OP}, which in turn is deduced from a chain of results involving both the ``restricted crossed products'' of Dang Ngoc \cite{DN} and the ``twisted crossed products'' of Green \cite{G}. Since we are only interested in ordinary crossed products, we give a short direct proof, which is similar to that of \cite[Theorem~2.1]{ARR}.

\begin{proof} When we view $\xi\in H^\perp$ as a character of $G/H$, we have $\Phi\circ\hat\beta_\xi=\hat\gamma_\xi\circ\Phi$, and an easy calculation using this shows that $\Psi(b)$ belongs to the induced algebra. 

Next we show that $\Psi$ is surjective. Since each generator $i_A(a)i_{G/H}(s+H)$ is just $\Phi(i_A(a)i_G(s))$, $\Phi$ is surjective. Thus for each $\xi$, the elements $\Psi(b)(\xi)=\Phi(\hat\beta_\xi^{-1}(\hat\beta_\xi(b)))$ fill out $A\times_{\gamma} (G/H)$. Next, observe that the integrated form $i_G|_H:H\to UM(A\times_\gamma G)$ maps $C^*(H)$ into the centre $Z(M(A\times_\gamma G))$, and hence by Fourier transformation gives a central action of $C(\hat H)=C(\hat G/H^\perp)$ on $A\times_\gamma G$. The algebra $C(\hat H)=C(\hat G/H^\perp)\subset C(\hat G)$ acts by pointwise multiplication on the induced algebra, and we claim that $\Psi$ is then $C(\hat H)$-linear. To see this, consider a generator $\delta_h$ for $C^*(H)$: since the Fourier transform of $\delta_h$ is the function $\epsilon_{h^{-1}}:\xi\mapsto \overline{\xi(h)}$, we need to check that $\Psi(i_G(h)b)(\xi)=\overline{\xi(h)}\Phi(b)(\xi)$. For $b=i_A(a)i_G(s)$, we have
\begin{align*}
\Psi(i_G(h)b)(\xi)&=\Phi(\hat\beta_{\xi}^{-1}(i_G(h)i_A(a)i_G(s)))=\Phi(\hat\beta_{\xi}^{-1}(i_A(a)i_G(h+s)))\\
&=\Phi(\overline{\xi(h+s)}i_A(a)i_G(h+s))=\overline{\xi(h)\xi(s)}i_A(a)i_{G/H}(s+H)\\
&=\overline{\xi(h)}\Psi(\beta_{\xi}^{-1}(b))=\overline{\xi(h)}\Psi(b)(\xi),
\end{align*}
as required. Thus $\Psi$ is $C(\hat H)$-linear, as claimed. Now a partition-of-unity argument (as in the Lemma on page~704 of \cite{Ech}, for example) shows that $\Psi$ is surjective.

To see that $\Psi$ is injective, it suffices to show that every irreducible representation $\pi\times U$ of $A\times_\beta G$ factors through $\Psi$. 
Since $G$ is abelian, the operators $U_h$ for $h\in H$ commute with every $U_s$, and since $\beta_h$ is the identity on $A$, $U_h$ commutes with every $\pi(a)$ too. So $U$ maps $H$ into the commutant $(\pi\times U)(A\times G)'$, which is $\C1$ because $\pi\times U$ is irreducible, and there is a character $\theta$ of $H$ such that $U_h=\theta(h)1$ for $h\in H$. By Pontryagin duality, there is a character $\chi\in \hat G$ such that $\chi|_H=\theta$. Now $\chi^{-1}U$ factors through a unitary representation $V$ of $G/H$, and $(\pi,V)$ is covariant because $(\pi,U)$ is. Since $(\pi\times U)(b)=(\pi\times V)(\Psi(b)(\chi))$, the result follows. \end{proof}

\begin{proof}[Proof of Theorem~\ref{cpofkgraph}]  We begin by showing that
$\Aut\Lambda$ is finite. $\Aut \Lambda$ consists of bijective functors
$\alpha:\Lambda\to\Lambda$ which preserve the degree map:
$d(\alpha(\lambda))=d(\lambda)$ for $\lambda\in\Lambda$. In particular, for each $n\in\N^k$ $\alpha$ is a bijection
of $\Lambda^n$ onto $\Lambda^n$. Define $\phi:\Aut\Lambda\to
\bigoplus_{i=1}^k\Perm \Lambda^{e_i}$ by taking $\phi(\alpha)$ to be
the tuple $(\alpha|_{\Lambda^{e_i}})_{i=1}^k$. Then $(\alpha\beta)|_{\Lambda^{e_i}}=\alpha|_{\Lambda^{e_i}}\circ\beta|_{\Lambda^{e_i}}$, so $\phi$ is a homomorphism. If
$\phi(\alpha)=\phi(\beta)$, then $\alpha(e) = \beta(e)$ for all
edges $e\in\bigcup_{i=1}^k\Lambda^{e_i}$, and it follows from the  factorisation property that $\alpha(\lambda)=\beta(\lambda)$ for all $\lambda$; thus $\phi$ is one-to-one. Since $\Lambda$ is a finite
$k$-graph, $\bigoplus_{i=1}^k\Perm \Lambda^{e_i}$ is a finite group, and the injectivity of $\phi$ implies that $|\Aut\Lambda|<\infty$.
Let $N:=\ker\alpha$, and let $q:\Z^l\to \Z^l/N$ be the quotient map. Then there is a unique monomorphism $\beta:\Z^l/N\to \Aut\Lambda$ such that $\alpha=\beta\circ q$. Since $\Aut\Lambda$ is finite, this implies in particular that  $\Z^l/N$ is finite. The corresponding actions on $C^*(\Lambda)$ satisfy $\tilde\alpha=\tilde\beta\circ q$, so the result follows from Theorem~\ref{OPind}.
\end{proof}

\begin{cor}\label{mapcylappl}
If $\alpha$ is an automorphism of a $k$-graph $\Lambda$ such that $\alpha^n$ is the identity, then $C^*(\Lambda)\rtimes_{\tilde\alpha}\Z$ is isomorphic to the mapping cylinder
\[
M(\gamma):=\{f:[0,1]\to C^*(\Lambda)\times_{\tilde\alpha}(\Z/n\Z):\text{ $f$ is continuous and $f(1)=\gamma(f(0)$}\}
\]
associated to the generator $\gamma:=\hat{\tilde\alpha}_{e^{2\pi i/n}}$ of the dual action.
\end{cor}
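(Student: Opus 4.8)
The plan is to derive this from Theorem~\ref{OPind} rather than from Theorem~\ref{cpofkgraph}. The point is that the statement asks for the crossed product of $\cstar(\Lambda)$ by $\Z/n\Z$, not by $\Z/\ker\alpha$, and the induced action of $\Z/n\Z$ on $\cstar(\Lambda)$ need not be faithful; this is harmless because Theorem~\ref{OPind} does not require faithfulness of the action of the quotient group. So the only real work is in recognising the induced $\cstar$-algebra produced by Theorem~\ref{OPind} as a mapping cylinder.

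First I would observe that the automorphism $\tilde\alpha$ of $\cstar(\Lambda)$, which is characterised by $\tilde\alpha(s_\lambda)=s_{\alpha(\lambda)}$, satisfies $\tilde\alpha^n=\id$ because $\alpha^n=\id$; hence the action $m\mapsto\tilde\alpha^m$ of $\Z$ on $\cstar(\Lambda)$ factors as $\tilde\alpha=\epsilon\circ q$ for the quotient map $q:\Z\to\Z/n\Z$ and the action $\epsilon$ of $\Z/n\Z$ used to form $\cstar(\Lambda)\rtimes_{\tilde\alpha}(\Z/n\Z)$ in the statement. Applying Theorem~\ref{OPind} with $G=\Z$, $H=n\Z$ and with the role of $\gamma$ there played by $\epsilon$, and identifying $\hat\Z$ with $\T$ via $(z,m)\mapsto z^m$ (so that $(n\Z)^\perp=\mu_n:=\{z\in\T:z^n=1\}$), gives an isomorphism of $\cstar(\Lambda)\rtimes_{\tilde\alpha}\Z$ onto $\Ind_{\mu_n}^{\T}(\cstar(\Lambda)\rtimes_{\tilde\alpha}(\Z/n\Z),\hat\epsilon)$. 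Under the attendant identification $\mu_n\cong\widehat{\Z/n\Z}$, the root of unity $\zeta:=e^{2\pi i/n}$ corresponds to the character $k+n\Z\mapsto e^{2\pi ik/n}$, so $\hat\epsilon_\zeta$ is exactly the automorphism $\gamma=\hat{\tilde\alpha}_{e^{2\pi i/n}}$ of the statement, and $\gamma^n=\hat\epsilon_{\zeta^n}=\id$.

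Second I would set $B:=\cstar(\Lambda)\rtimes_{\tilde\alpha}(\Z/n\Z)$ and build an explicit isomorphism of $\Ind_{\mu_n}^{\T}(B,\hat\epsilon)$ onto $M(\gamma)$. The arc $t\mapsto e^{-2\pi it/n}$, $t\in[0,1]$, is a (closed) fundamental domain for the translation action of $\mu_n$ on $\T$, so for $f$ in the induced algebra the formula $F_f(t):=f(e^{-2\pi it/n})$ defines a continuous function $[0,1]\to B$. Writing $e^{-2\pi i/n}=1\cdot\zeta^{n-1}$ with $\zeta^{n-1}\in\mu_n$, the covariance condition $f(gh)=\hat\epsilon_h^{-1}(f(g))$ and the relation $\gamma^n=\id$ yield $F_f(1)=\gamma^{-(n-1)}(F_f(0))=\gamma(F_f(0))$, so $F_f\in M(\gamma)$. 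Conversely, each $z\in\T$ is uniquely of the form $\zeta^ke^{-2\pi it/n}$ with $k\in\{0,\dots,n-1\}$, $t\in[0,1)$, and $f(z):=\gamma^{-k}(F(t))$ turns any $F\in M(\gamma)$ back into an element of the induced algebra, continuity at the $n$-th roots of unity being exactly where $F(1)=\gamma(F(0))$ is used. These two assignments are mutually inverse, and as all the algebraic operations on both sides are pointwise they are $\cstar$-homomorphisms; composing the resulting isomorphism with that of the first step proves the corollary.

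I do not expect a substantive obstacle. The only thing that needs care is the bookkeeping with the pairing between $\Z$ and $\T$ and with the inverses in the covariance relation, since that is what decides whether one lands on $M(\gamma)$ or on $M(\gamma^{-1})$; these are isomorphic via $f\mapsto(t\mapsto f(1-t))$ in any case, but choosing the arc $e^{-2\pi it/n}$ (rather than $e^{2\pi it/n}$) makes the constant in the conclusion come out exactly as stated.
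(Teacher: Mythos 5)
Your proof is correct and follows essentially the same route as the paper: apply the Olesen--Pedersen result with $H=n\Z$ and then identify the induced algebra with the mapping cylinder via $f\mapsto\bigl(t\mapsto f(e^{-2\pi it/n})\bigr)$, which is exactly the map $\phi$ appearing in the paper's proof. Your one refinement --- invoking Theorem~\ref{OPind} directly so that possible non-faithfulness of the induced $\Z/n\Z$-action is irrelevant --- is in fact slightly more careful than the paper, which asserts that the subgroup $N$ of Theorem~\ref{cpofkgraph} equals $n\Z$ (literally true only when $\alpha$ has exact order $n$).
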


\begin{proof}
The subgroup $N$ in Theorem~\ref{cpofkgraph} is $N=n\Z$, so $N^\perp$ is the group $C_n$ of $n$th roots of unity. But if $\theta:C_n\to \Aut B$ is an action on a $C^*$-algebra $B$, then the map $\phi:\Ind_{C_n}^\T(B,\theta)\to C([0,1],B)$ defined by $\phi(f)(t)=f(e^{-2\pi it/n})$ is an isomorphism onto the mapping cylinder $M(\theta_{e^{2\pi i/n}})$.
\end{proof}

The description of $C^*(\Lambda)\times_{\tilde\alpha}\Z^l$ as an induced algebra gives us, for example, a description of its primitive ideal space as the quotient of $\T^l\times \Prim (C^*(\Lambda)\times_{\tilde\beta}(Z^l/N))$ by the diagonal action of $N^\perp$ (see \cite[Proposition~6.16]{RW}). This is particularly interesting if $C^*(\Lambda)$ is simple, for then $\Prim C^*(\Lambda)=\{0\}$, and $\Prim(C^*(\Lambda)\times_{\tilde\alpha}\Z^l)$ is homeomorphic to $\T^l/N^\perp=\hat N\cong \T^l$ (because $N$, being a subgroup of a free abelian group, is itself free abelian). So we would like to know when $C^*(\Lambda)\times (\Z/n\Z)$ is simple. For $k=1$, $\Lambda$ is the path space of a directed graph $E$ and $C^*(\Lambda)=C^*(E)$, and this question has an elegant answer.

We say that a direct graph $E$ is \emph{$n$-connected} if for every pair of vertices $v,w\in E^0$, there is a path of length $n$ with source $v$ and range $w$. The following result is basically due to Katayama and Takehana \cite{KT}.

\begin{prop}\label{KTouter}
Suppose that $E$ is a finite directed graph which is $n$-connected for some $n\geq 1$. If $\alpha$ is an automorphism of $E$ which is not the identity, then $\tilde\alpha\in C^*(E)$ is outer.
\end{prop}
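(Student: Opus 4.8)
The plan is to suppose, aiming for a contradiction, that $\tilde\alpha$ is inner, say $\tilde\alpha=\Ad u$ for some unitary $u$, and then derive a contradiction, broadly following the strategy of Katayama and Takehana. Since $E$ is finite, $C^*(E)$ is unital, so the implementing unitary may be taken in $C^*(E)$ itself. Because $E$ is $n$-connected its vertex matrix is primitive, and a short check shows that $C^*(E)$ is then simple: the only $n$-connected graph whose $C^*$-algebra fails to be simple is the graph with a single vertex and a single loop, and that graph has only the identity automorphism, so there is nothing to prove for it. In particular we may assume $Z(C^*(E))=\C1$.

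The first genuine step is to replace $u$ by a gauge-invariant unitary. Since $\alpha$ preserves degrees, $\tilde\alpha$ commutes with the gauge action $\gamma$ of $\T$, so $\Ad(\gamma_z(u))=\gamma_z\circ\Ad u\circ\gamma_z^{-1}=\tilde\alpha=\Ad u$ for all $z$; as the centre is trivial this gives $\gamma_z(u)=\chi(z)u$ for a continuous character $\chi$ of $\T$, say $\chi(z)=z^k$. By the argument used to prove that $\Aut\Lambda$ is finite in the proof of Theorem~\ref{cpofkgraph}, $\Aut E$ is finite, so $\alpha$ has finite order $p$; then $\Ad(u^p)=\tilde\alpha^p=\id$ forces $u^p\in\C1$, and after multiplying $u$ by a scalar we may assume $u^p=1$. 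Now $z^{kp}1=\gamma_z(u^p)=1$ for every $z\in\T$, so $k=0$, and $u$ lies in the AF core $\F:=C^*(E)^\gamma$, which is the closure of the increasing union of the unital finite-dimensional subalgebras $\F_N:=\overline{\lspan}\{s_\mu s_\nu^* : |\mu|=|\nu|=N,\ s(\mu)=s(\nu)\}$.

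It remains to rule out a gauge-invariant unitary $u\in\F$ implementing $\tilde\alpha$ when $\alpha\neq\id$, and this is where I would adapt the combinatorial core of the Katayama--Takehana argument. Choose an edge $e$ with $\alpha(e)\neq e$ and, for $N$ large, a unitary $v$ in $\F_N$ with $\|u-v\|$ small, so that $\|vs_ev^*-s_{\alpha(e)}\|$ is small. Writing $s_e=\sum_{|\rho|=N-1,\ r(\rho)=s(e)}s_{e\rho}s_\rho^*$ and $v=\sum c_{\mu\nu}s_\mu s_\nu^*$, one computes $vs_ev^*$ as an explicit combination of the $s_\eta s_\sigma^*$ with $|\eta|=N+1$ and $|\sigma|=N$, whose coefficients are bilinear in the $c_{\mu\nu}$; comparing with $s_{\alpha(e)}=\sum_{|\tau|=N}s_{\alpha(e)\tau}s_\tau^*$ pins these coefficients down — those attached to $\eta$ not beginning in $\alpha(e)$ must be near $0$ and those attached to $\eta$ beginning in $\alpha(e)$ near $1$ — and the resulting constraints turn out to be incompatible with the columns of the blocks of the unitary $v$ having norm one. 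It is here that $n$-connectedness enters, guaranteeing that the relevant collections of paths, and hence the relevant matrix coefficients of $v$, are nonempty and numerous enough to force the incompatibility. I expect this estimate to be the only real obstacle, the reductions above being routine; one might alternatively try to quote the Katayama--Takehana theorem directly in its Cuntz--Krieger-algebra form for the (possibly non-$\{0,1\}$) primitive vertex matrix of $E$, after checking that the hypotheses align.
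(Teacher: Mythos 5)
Your opening reductions are correct and are a genuinely different way in from the paper's: disposing of the single loop, observing that $n$-connectedness forces $C^*(E)$ to be simple so that the centre is trivial, deducing $\gamma_z(u)=z^ku$ from the fact that $\tilde\alpha$ commutes with the gauge action, using finiteness of $\Aut E$ to normalise $u^p=1$ and conclude $k=0$, and hence placing $u$ in the AF core $\F=C^*(E)^\gamma$ and approximating it by unitaries in the finite-dimensional subalgebras $\F_N$. All of that is sound. (The paper does none of this directly; it realises $C^*(E)$ as the Cuntz--Pimsner algebra $\O(X(E))$ of the graph bimodule, checks that $X(E)$ is of finite type, that $n$-connectedness gives $M^n(v,w)>0$ and hence $X(E)$-aperiodicity via \cite[Proposition~2.3]{KT}, and that the map $U$ induced by $\alpha_1\neq\id$ is not of the form $x\mapsto uxu^*$ for $u\in C(E^0)$, and then quotes \cite[Proposition~3.8]{KT}.)

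The genuine gap is the final step, which you flag yourself: ``the resulting constraints turn out to be incompatible \dots\ I expect this estimate to be the only real obstacle.'' That estimate is not a routine verification; it is the entire content of the proposition. Ruling out a gauge-invariant unitary $u\in\F$ with $us_eu^*=s_{\alpha(e)}$ for all $e$ is exactly what \cite[Propositions~2.3 and~3.8]{KT} accomplish, and the mechanism is $X(E)$-aperiodicity of $C(E^0)$ --- equivalently, by \cite[Theorem~2.4]{KT}, simplicity of the core $\F$ --- which is precisely where $M^n(v,w)>0$ enters. Your sketch of comparing the coefficients of $vs_ev^*$ with those of $s_{\alpha(e)}$ and deriving ``an incompatibility with the columns of the blocks of $v$ having norm one'' never says what the incompatibility is, and it is not clear that one exists at the level you describe: the restriction of $\tilde\alpha$ to each $\F_N$, being an automorphism of a finite-dimensional algebra, \emph{is} implemented by a unitary in $\F_{N+1}$, so no contradiction can come from a single finite stage. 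The obstruction is global and comes from the interaction of $u$ with the shift $a\mapsto\sum_e s_eas_e^*$, i.e.\ from aperiodicity of the core. So you must either carry out that argument in detail (essentially reproving \cite[Proposition~3.8]{KT} in the graph setting, with simplicity of the core supplied by the Pask--Rho criterion discussed in the remark following the proposition), or verify the hypotheses of \cite{KT} and cite it, as the paper does; note that the citation is not a matter of ``plugging in the vertex matrix'' but requires setting up the bimodule $X(E)$ and checking the finite-type and aperiodicity conditions. As it stands, the proposal is a correct reduction followed by an unproved key lemma.
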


\begin{proof}
We consider the Hilbert bimodule $X(E)$ over $c(E^0)$ constructed in \cite{FR} and \cite[\S8]{R}: $X(E)$ is the completion of the bimodule $X_0=C(E^1)$  with
actions and inner products
\[
(a\cdot f\cdot b)(e) = a(r(e))f(e)b(s(e))\ \text{ and }\ \langle f,g\rangle_A (v)=\sum_{e\in E^1, s(e)=v}\overline{f(e)}g(e).
\]
Indeed, since the graph can have no sources or sinks, the module $X_0$ will already be complete and is full. Then $\{k_{X(E)}(\delta_e), k_A(\delta_v)\}$ is a Cuntz-Krieger $E$ family in $\O(X(E))$, and the corresponding homomorphism of $C^*(E)$ into $\O(X(E))$ is an isomorphism (see \cite[Example~8.13]{R}, for example). The elements $|s^{-1}(s(e))|^{-1/2}\delta_e$ form a Parseval frame for $X(E)$, so $X(E)$ is ``of finite type'' as required in \cite{KT}. The $E^0\times E^0$ matrix $M$ constructed in \cite[page~497]{KT} is the vertex matrix of $E$, and the hypothesis of $n$-connectedness says precisely that $M^n(v,w)>0$ for every $v,w\in E^0$. So Proposition~2.3 of \cite{KT} says that $C(E^0)$ is $X(E)$-aperiodic.

We define $\theta:C(E^0)\to C(E^0)$ by $\theta(f):=f\circ \alpha_0^{-1}$ and $U:X(E)\to X(E)$ by $U(x):=x\circ\alpha_1^{-1}$, and then $(\theta,U)$ satisfy the hypotheses (3.1) used in \cite{KT}. Since $\alpha$ is not the identity, the bijection $\alpha_1$ of $E^1$ is not the identity, and since automorphisms of $X(E)$ of the form $x\mapsto uxu^*$ are $C(E^1)$-linear, $U$ cannot have this form. So Proposition~3.8 of \cite{KT} implies that $\alpha_U$ is outer. Since $\alpha_U(\delta_e)=\delta_{\alpha_1(e)}$ and $\theta(\delta_v)=\delta_{\alpha_0(v)}$, the isomorphism of $C^*(E)$ onto $\O(X(E))$ carries $\tilde\alpha$ into $\alpha_U$, and $\tilde\alpha$ is outer too.
\end{proof}

\begin{remark}
Since Theorem~2.4 of \cite{KT} says that $X$-aperiodicity is equivalent to simplicity of the core $\O(X)^\gamma$, we should reconcile Proposition~\ref{KTouter} with the results of Pask and Rho on simplicity of $C^*(E)^\gamma$ for finite $E$ \cite{PaskRho}. (Examples in \cite{PaskRho} show that the results do not extend to infinite $E$.) Pask--Rho define the period $\per(v)$ of a vertex to be the greatest common divisor of $S(v):=\{|\mu|:s(\mu)=r(\mu)=v,\ |\mu|>1\}$; if $E$ is strongly connected, then all vertices have the same period, called the \emph{period} of $E$. They prove in \cite[Theorem~6.2]{PaskRho} that $C^*(E)^\gamma$ is simple if and only if $E$ is strongly connected with period $1$. This and Proposition~\ref{KTouter} imply that a graph $E$ is strongly connected with period $1$ if and only if $E$ is $n$-connected for some $n$. We should, of course, be able to see this directly.

If $E$ is $n$-connected, then we can get from a vertex $v$ to each of its neighbours by a path of length $n$, and hence back to $v$ by one of length $n+1$. Hence $v$ admits return paths of length $n+k$ for every $k\in \N$, and $\per(v)=1$. If $E$ is strongly connected with period $1$, then there exists $m$ such that the subsemigroup $S(v)$ of $\N$ contains $m+\N$. If $r$ is the diameter of $E$, then we can get from any vertex to any other by a path of length $m+2r$: choose a path through $v$ of length $s\leq 2r$, and insert a return path at $v$ of length $m+(2r-s)$. 
\end{remark}

\begin{cor}\label{applyKish}
Suppose that $\alpha$ is an action of $\Z^l$ on an $n$-connected directed graph $E$ which does not consist of a single loop, and $\beta$ is the injection $\beta:\Z^l/N\to \Aut E$ of a finite quotient of $\Z^l$ such that $\alpha=\beta\circ q$. Then $C^*(E)\times_{\tilde\beta}(\Z^l/N)$ is simple, and the primitive ideal space of $C^*(E)\times_{\tilde\alpha}\Z^l$ is homeomorphic to $\T^l$.
\end{cor}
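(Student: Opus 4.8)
The plan is to assemble the corollary from three ingredients: simplicity of $C^*(E)$, the outerness supplied by Proposition~\ref{KTouter}, and the description of $C^*(E)\times_{\tilde\alpha}\Z^l$ as an induced algebra coming from Theorem~\ref{cpofkgraph}. First I would observe that $C^*(E)$ is simple. Since $E$ is $n$-connected it is strongly connected, so every vertex emits and receives edges, and the remark following Proposition~\ref{KTouter} shows that $E$ has period~$1$. A single cycle of length $m$ has period $m$, so a strongly connected graph of period~$1$ that is not the single loop is not a single cycle; and in a strongly connected graph that is not a single cycle every cycle has an exit. The standard simplicity criterion for graph $C^*$-algebras then shows that $C^*(E)$ is simple.

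Next I would verify that the action $\tilde\beta$ of the finite abelian group $\Z^l/N$ on $C^*(E)$ is pointwise outer. Since $\beta$ is injective, $\beta_{m+N}$ is a non-identity automorphism of $E$ whenever $m\notin N$, and Proposition~\ref{KTouter} then says that $\tilde\beta_{m+N}$ is an outer automorphism of $C^*(E)$; because $C^*(E)$ is simple, such an automorphism is automatically properly outer. So $\tilde\beta$ is an outer action of a finite group on a simple $C^*$-algebra, and a theorem of Kishimoto on reduced crossed products of simple $C^*$-algebras by outer actions of discrete groups shows that every nonzero ideal of $C^*(E)\times_{\tilde\beta,r}(\Z^l/N)$ meets $C^*(E)$ nontrivially. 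Since $C^*(E)$ is simple this forces the reduced crossed product to be simple, and since $\Z^l/N$ is finite — hence amenable — the reduced and full crossed products coincide; thus $C^*(E)\times_{\tilde\beta}(\Z^l/N)$ is simple, which is the first assertion.

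For the second assertion I would apply Theorem~\ref{cpofkgraph} with $k=1$ to the $1$-graph $\Lambda$ associated to $E$, for which $C^*(\Lambda)=C^*(E)$ and the kernel of the induced $\Z^l$-action is the given subgroup $N$: this identifies $C^*(E)\times_{\tilde\alpha}\Z^l$ with $\Ind_{N^\perp}^{\T^l}(C^*(E)\times_{\tilde\beta}(\Z^l/N),\widehat{\tilde\beta})$. By the discussion following the proof of Theorem~\ref{cpofkgraph}, the primitive ideal space of this induced algebra is the quotient of $\T^l\times\Prim(C^*(E)\times_{\tilde\beta}(\Z^l/N))$ by the diagonal action of $N^\perp$; since $C^*(E)\times_{\tilde\beta}(\Z^l/N)$ is simple the second factor is a point, so $\Prim(C^*(E)\times_{\tilde\alpha}\Z^l)$ is homeomorphic to $\T^l/N^\perp$. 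Dualising the exact sequence $0\to N\to\Z^l\to\Z^l/N\to 0$ gives $\T^l/N^\perp\cong\widehat N$, and $N$, being a finite-index subgroup of the free abelian group $\Z^l$, is free abelian of rank $l$, so $\widehat N\cong\T^l$. Hence $\Prim(C^*(E)\times_{\tilde\alpha}\Z^l)$ is homeomorphic to $\T^l$.

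The step I expect to be the main obstacle is the simplicity of $C^*(E)\times_{\tilde\beta}(\Z^l/N)$: one needs the crossed-product simplicity theorem in a form that covers outer actions of arbitrary finite abelian groups (not merely cyclic ones), and one must make sure the outerness produced by Proposition~\ref{KTouter} is upgraded to proper outerness — which is precisely where simplicity of $C^*(E)$ enters. The remaining steps are routine once the results already in the paper are in hand.
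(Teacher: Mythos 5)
Your proof is correct and follows essentially the same route as the paper: simplicity of $C^*(E)$ from the graph-algebra simplicity criterion, outerness of each $\tilde\beta_{m+N}$ from Proposition~\ref{KTouter}, Kishimoto's theorem for simplicity of the crossed product, and the induced-algebra picture from Theorem~\ref{cpofkgraph} together with the $\Prim$ description of induced algebras to get $\T^l/N^\perp\cong\widehat N\cong\T^l$. The extra care you take over proper outerness and the coincidence of full and reduced crossed products for the finite group is a sensible filling-in of details the paper leaves implicit.
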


\begin{proof}
Theorem~\ref{cpofkgraph} tells us that there is an action $\beta$ of a finite quotient $\Z^l/N$ such that $\beta_{m+N}$ is nontrivial whenever $m+N\not=N$. So Proposition~\ref{KTouter} imples that $\tilde\beta_{m+N}$ is outer for every non-zero $m+N$. The $n$-connectedness hypothesis implies that $E$ cannot consist of single cycle of length greater than $1$, so the simplicity theorem for graph algebras (or the original theorem of Cuntz and Krieger) tells us that $C^*(E)$ is simple. Thus Kishimoto's \cite[Theorem~3.1]{Kish} implies that $C^*(E)\times_{\tilde\beta}(\Z^l/N)$ is simple. Now the result follows from the isomorphism of Theorem~\ref{cpofkgraph} and the description of the primitive ideal space of induced algebras in \cite[Proposition~6.61]{RW}.
\end{proof}

It would be interesting to have similar criteria for the simplicity of $C^*(\Lambda)\times_{\tilde\beta}(\Z^l/N)$ when $\Lambda$ has rank greater than $1$. Robertson and Sims have given us good criteria for the simplicity of $C^*(\Lambda)$ \cite{RobS}, and of course Kishimoto's theorem is still valid, so what is missing is a version of the Katayama-Takehana theorem (that is, Proposition~\ref{KTouter}) for $k$-graphs. It is not obvious, though, that the methods used in \cite{KT} will carry over.

\section{Domino graphs}\label{s_dominographs}

The $2$-graphs in \cite{PRW} are associated to \emph{basic data}
$(T,q,t,w)$ consisting of:
\begin{itemize}
\item a \emph{tile} $T$, which is a hereditary ($j\in T$
and $0\leq i\leq j$ imply $i\in T$) subset of $\N^2$ with finite
cardinality $|T|$;
\item an \emph{alphabet} $\{0,1,\ldots,q-1\}$, where $q\geq 2$ is an integer; we view the alphabet as a commutative ring by identifying it with $\Zmod{q}$ in the obvious way;
\item an element $t$ of the alphabet, called the \emph{trace}; and
\item a function $w:T\to \{0,1,\ldots,q-1\}$, called the \emph{rule}.
\end{itemize}
Provided certain values of $w$ are invertible elements of $\Zmod{q}$, Theorem~3.4 of \cite{PRW} tells us how to construct a $2$-graph $\Lambda=\Lambda(T,q,t,w)$ from this data.

A \emph{domino} is a tile of the form
$T=\{0,e_1,2e_1,\ldots,(n-1)e_1\}$, which is determined by $|T|:=n$. In this
paper, we write $(n,q,t)$ for the basic data consisting of the
domino of cardinality $n$, alphabet $\Zmod{q}$ and trace $t$, and we
always take the rule $w$ to be the constant function $1$.
Then every value of $w$ is invertible, and \cite[Theorem~3.4]{PRW} gives a
$2$-graph $\Lambda=\Lambda(n,q,t)$, which we call a \emph{domino graph}.

Each $2$-graph $\Lambda$ of \cite{PRW} is finite, has no sources
\cite[Proposition~3.2]{PRW}, is strongly connected in the sense
that $u\Lambda v$ is nonempty for all
$u,v\in\Lambda^0$ \cite[Proposition~5.3]{PRW}, and has at most one edge
of each colour between any pair of vertices
\cite[Proposition~3.5(b)]{PRW}. So domino graphs
have all these properties.

We now briefly recall the construction of \cite[\S2]{PRW} as it applies to domino graphs.
We picture the domino $T$ with $|T|=n$ as a row of $n$ boxes; for
example, we draw the domino with $n=6$ as $\sixdominovtx{0.5}{}{}{}{}{}{}$\,, and we picture a vertex in
$\Lambda$ as a copy of $T$ in which each box is filled with an
element of the alphabet so that the sum of the entries is $t\pmod{q}$. Formally,
\[\Lambda(n,q,t)^0 =\Big\{v:T\to\Z/q\Z : \sum_{i\in T}v(i)=t\pmod{q}\Big\},\]
and in $\Lambda(6,2,0)$, for example, the
function $v:T\to\Zmod{2}$ with
$v(0)=v(3e_1)=0$, $v(e_1)=v(2e_1)=v(4e_1)=v(5e_1)=1$ gives a vertex pictured as $\sixdominovtx{0.5}{0}{1}{1}{0}{1}{1}$\,.

To describe the paths, we need some notation. For $S\subset \Z^2$
and $m\in \Z^2$, we define $S+m:=\{i+m:i\in S\}$ and $T(m):=\bigcup_{0\leq l\leq m}T+l$. If
$f:S\to\Zmod{q}$ is a function defined on a subset $S$ of $\N^2$
containing $T+l$, then we define $f|_{T+l}:T\to\Zmod{q}$ by
\begin{equation*}
f|_{T+l} (i) = f(i+l) \text{ for } i\in T.
\end{equation*}
A \emph{path of degree $m$} is a function $\lambda:T(m)\to \Zmod{q}$
such that $\lambda|_{T+l}$ is a vertex for every $0\leq l\leq m$; then
$\lambda$ has \emph{source} $s(\lambda)=\lambda|_{T+m}$ and
\emph{range} $r(\lambda)=\lambda|_T$. Then $\Lambda^m$ denotes the
set of paths of degree $m$ and $\Lambda^\ast=\bigcup_{m\geq
0}\Lambda^m$. In pictures, paths in $\Lambda^\ast$ are
block diagrams covered by translates of $T$, filled in so that each
translate is a valid vertex. For example, in  $\Lambda(6,2,0)$, the diagram
\begin{equation}\label{diag_mu}
\beginpicture \setcoordinatesystem units 
<0.5cm,0.5cm> \setplotarea x from 0 to 9, y from 0 to 3 \setlinear
\plot 0 0 9 0 / \plot 0 1 9 1 / \plot 0 2 9 2 / \plot 0 3 9 3 /
\plot 0 0 0 3 / \plot 1 0 1 3 / \plot 2 0 2 3 / \plot 3 0 3 3 /
\plot 4 0 4 3 / \plot 5 0 5 3 / \plot 6 0 6 3 / \plot 7 0 7 3 /
\plot 8 0 8 3 / \plot 9 0 9 3 / \put{$\mu=$} at -1 1.5
\put{0} at 0.5 0.5 \put{1} at 1.5
0.5 \put{1} at 2.5 0.5 \put{0} at 3.5 0.5 \put{1} at 4.5 0.5 \put{1}
at 5.5 0.5 \put{0} at 6.5 0.5 \put{1} at 7.5 0.5 \put{1} at 8.5 0.5
\put{0} at 0.5 1.5 \put{0} at 1.5 1.5 \put{0} at 2.5 1.5 \put{1} at
3.5 1.5 \put{1} at 4.5 1.5 \put{0} at 5.5 1.5 \put{0} at 6.5 1.5
\put{0} at 7.5 1.5 \put{0} at 8.5 1.5
\put{0} at 0.5 2.5 \put{0} at 1.5 2.5 \put{0} at 2.5 2.5 \put{0} at
3.5 2.5 \put{0} at 4.5 2.5 \put{0} at 5.5 2.5 \put{0} at 6.5 2.5
\put{0} at 7.5 2.5 \put{0} at 8.5 2.5
\endpicture
\end{equation} represents a path $\mu$ of degree $(3,2)$ from
$s(\mu)=\sixdominovtx{0.5}{0}{0}{0}{0}{0}{0}$ (the upper RH
translate of $T$) to $r(\mu)=\sixdominovtx{0.5}{0}{1}{1}{0}{1}{1}$ (the
lower LH translate).

The paths $\Lambda^\ast$ form a category with the composition
defined in \cite[Proposition~3.2]{PRW}: we say $\mu\in\Lambda^m$ and
$\nu\in \Lambda^p$ are \emph{composable} if $s(\mu)=r(\nu)$ and then
there exists a unique path $\lambda\in\Lambda^{m+p}$ satisfying $\lambda|_{T(m)}=\mu$ and $\lambda|{T(n)-m}=\nu$.
So if $\mu$ is the path in \eqref{diag_mu} and $\nu$ is the path of
degree $(2,1)$ below
\[\beginpicture \setcoordinatesystem units <0.5cm,0.5cm>
\setplotarea x from 0 to 8, y from 0 to 2 \setlinear \plot 0 0 8 0 /
\plot 0 0 0 2 / \plot 0 1 8 1 / \plot 0 2 8 2 / \plot 1 0 1 2 /
\plot 2 0 2 2 / \plot 3 0 3 2 / \plot 4 0 4 2 / \plot 5 0 5 2 /
\plot 6 0 6 2 / \plot 7 0 7 2 / \plot 8 0 8 2 / \put{0} at 0.5 0.5
\put{0} at 1.5 0.5 \put{0} at 2.5 0.5 \put{0} at 3.5 0.5 \put{0} at
4.5 0.5 \put{0} at 5.5 0.5 \put{0} at 6.5 0.5 \put{0} at 7.5 0.5
\put{1} at 0.5 1.5 \put{1} at 1.5 1.5 \put{1} at 2.5 1.5 \put{1} at
3.5 1.5 \put{1} at 4.5 1.5 \put{1} at 5.5 1.5 \put{1} at 6.5 1.5
\put{1} at 7.5 1.5 \put{$\nu=$} at -1 1
\endpicture\]
then $s(\mu)=r(\nu)$, and $\mu\nu$ is found by filling in the empty squares
in the diagram
\[\beginpicture \setcoordinatesystem units 
<0.5cm,0.5cm> \setplotarea x from 0 to 11, y from 0 to 3 \setlinear
\plot 0 0 9 0 / \plot 0 1 9 1 / \plot 0 2 9 2 / \plot 0 3 9 3 /
\plot 0 0 0 3 / \plot 1 0 1 3 / \plot 2 0 2 3 / \plot 3 0 3 3 /
\plot 4 0 4 3 / \plot 5 0 5 3 / \plot 6 0 6 3 / \plot 7 0 7 3 /
\plot 8 0 8 3 / \plot 9 0 9 3 / \put{$\mu=$} at -1 1.5
\put{0} at 0.5 0.5 \put{1} at 1.5 0.5 \put{1} at 2.5 0.5 \put{0} at
3.5 0.5 \put{1} at 4.5 0.5 \put{1} at 5.5 0.5 \put{0} at 6.5 0.5
\put{1} at 7.5 0.5 \put{1} at 8.5 0.5
\put{0} at 0.5 1.5 \put{0} at 1.5 1.5 \put{0} at 2.5 1.5 \put{1} at
3.5 1.5 \put{1} at 4.5 1.5 \put{0} at 5.5 1.5 \put{0} at 6.5 1.5
\put{0} at 7.5 1.5 \put{0} at 8.5 1.5
\put{0} at 0.5 2.5 \put{0} at 1.5 2.5 \put{0} at 2.5 2.5 \put{0} at
3.5 2.5 \put{0} at 4.5 2.5 \put{0} at 5.5 2.5 \put{0} at 6.5 2.5
\put{0} at 7.5 2.5 \put{0} at 8.5 2.5 
\plot 3 2 11 2 / \plot 3 2 3 4 / \plot 3 3 11 3 / \plot 3 4 11 4 /
\plot 4 2 4 4 / \plot 5 2 5 4 / \plot 6 2 6 4 / \plot 7 2 7 4 /
\plot 8 2 8 4 / \plot 9 2 9 4 / \plot 10 2 10 4 / \plot 11 2 11 4 /
\put{0} at 3.5 2.5 \put{0} at 4.5 2.5 \put{0} at 5.5 2.5 \put{0} at
6.5 2.5 \put{0} at 7.5 2.5 \put{0} at 8.5 2.5 \put{0} at 9.5 2.5
\put{0} at 10.5 2.5 \put{1} at 3.5 3.5 \put{1} at 4.5 3.5 \put{1} at
5.5 3.5 \put{1} at 6.5 3.5 \put{1} at
7.5 3.5 \put{1} at 8.5 3.5 \put{1} at 9.5 3.5 \put{1} at 10.5 3.5 
\setdashpattern <3pt,3pt> \plot 0 3 0 4 / \plot 0 4 3 4 / \plot 9 0
11 0 / \plot 11 0 11 2 / \plot 1 3 1 4 / \plot 2 3 2 4 / \plot 9 1
11 1 / \plot 10 0 10 2 /
\endpicture\;;\]
the proof of \cite[Proposition~3.2]{PRW} shows that can do this in such a way that the entry for
each empty box is completely determined by previous steps. With
$d:\Lambda\to\N^2$ defined by $d(\lambda)=m$ for
$\lambda\in\Lambda^m$, $\Lambda=\Lambda(n,q,t)$ is a $2$-graph \cite[Theorem~3.4]{PRW}.

\begin{remark}
Theorem~4.1 of \cite{PRW} says the two-sided infinite path space
$\Lambda(n,q,0)^\Delta$ is conjugate to the $2$-dimensional shift of finite
type with underlying space
\[
\Omega= \Big\{ f:\Z^2 \to\Zmod{q}:\sum_{i\in T} f|_{T+l}(i)=
0\pmod{q} \text{ for all } l\in\Z^2 \Big\}.
\]
The shift $\Omega$ corresponds to the model $R^q_2/(g)$ of
\cite[\S3]{KitS4} in which $g$ is the cyclotomic polynomial
$g_T(u_1,u_2)=1+u_1+\cdots+u_1^{n-1}$. Theorem~6.5(2) of
\cite{Schmidt} implies that $\Omega$ is a non-mixing shift, whereas
the shifts associated to the graphs in \cite[\S5--6]{PRW} are mixing.
\end{remark}

We can explicitly describe the skeletons of domino graphs as follows.

\begin{prop}\label{prop_dominographstructure}
Suppose that $\Lambda=\Lambda(n,q,t)$ is a domino $2$-graph. Then:

\textnormal{(1)} The red graph $R\Lambda$ is
the complete directed graph $K_{q^{n-1}}$, so that
$|v\Lambdared u|=1$ for all $u,v\in\Lambda^0$.

\textnormal{(2)} $\Lambda^0$ can be identified with the set $A^n_t$ of words of length $n$
with trace $t\pmod{q}$ over the alphabet $A=\Zmod{q}$.

\textnormal{(3)} The blue graph $B\Lambda$ consists of disjoint cycles whose lengths divide $n$, and correspond to the necklaces of $A^n_t$ discussed in Appendix~\ref{s_necklaces}. For each divisor $d$ of $n$, the number $h_d$ of blue cycles of length $d$ in $\Lambda$ is
\[
h_d=\sum_{{s\in\Zmod{q},\;d^{-1}sn
\equiv t\pmod{q}}}L_q(d,s),
\]
where $L_q(d,s)$ is the number of Lyndon words of $A^{d}_s$
described in \eqref{eqn_L}.
\end{prop}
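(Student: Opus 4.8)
The plan is to read off the blue and red edges of $\Lambda(n,q,t)$ directly from the description of paths of degree $e_1$ and $e_2$ recalled above, and then feed the resulting combinatorics into the necklace/Lyndon-word machinery of Appendix~\ref{s_necklaces}.

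Parts (1) and (2) are immediate. Since the rule is the constant function $1$, the condition that $v\colon T\to\Zmod q$ be a vertex is just $\sum_{i\in T}v(i)=t\pmod q$; identifying $v$ with the word $v(0)v(e_1)\cdots v((n-1)e_1)$ therefore identifies $\Lambda^0$ with $A^n_t$, and since the first $n-1$ letters are arbitrary and the last is then determined, $|\Lambda^0|=q^{n-1}$. For (1), a path of degree $e_2$ is a labelling $\lambda$ of $T(e_2)=T\cup(T+e_2)$, and the only requirements are that $\lambda|_T$ and $\lambda|_{T+e_2}$ be vertices; as $T$ and $T+e_2$ are disjoint rows, these two vertices may be chosen independently, so for each ordered pair $(u,v)$ there is exactly one red path with range $v=\lambda|_T$ and source $u=\lambda|_{T+e_2}$. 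Hence $|v\Lambdared u|=1$ for all $u,v$, i.e.\ $R\Lambda=K_{q^{n-1}}$.

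For (3) I would first compute the blue edges. A path of degree $e_1$ is a labelling $\lambda_0\lambda_1\cdots\lambda_n$ of the $(n+1)$-box row $T(e_1)$, and the vertex conditions on $\lambda|_T=\lambda_0\cdots\lambda_{n-1}$ and $\lambda|_{T+e_1}=\lambda_1\cdots\lambda_n$ together amount to: $\lambda_0\cdots\lambda_{n-1}\in A^n_t$ and $\lambda_n=\lambda_0$. So the unique blue edge with range $v=v_0\cdots v_{n-1}$ is $v_0v_1\cdots v_{n-1}v_0$, with source $v_1\cdots v_{n-1}v_0=\sigma(v)$, where $\sigma$ is the cyclic shift. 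Thus $B\Lambda$ is the functional graph of the permutation $\sigma$ of $A^n_t$ --- every vertex has exactly one incoming and one outgoing blue edge --- so $B\Lambda$ is the disjoint union of the $\sigma$-orbits, each a directed cycle; and since $\sigma^n=\id$ every cycle length divides $n$.

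It remains to count the cycles of each length $d\mid n$, and here I would invoke Appendix~\ref{s_necklaces}. A $\sigma$-orbit of length $d$ in $A^n_t$ consists of the $d$ rotations of $u^{n/d}$ for a primitive word $u$ of length $d$ over $\Zmod q$, and the rotation-classes of primitive length-$d$ words are exactly the length-$d$ necklaces, each represented by a unique Lyndon word of length $d$. If $s\in\Zmod q$ is the (rotation-invariant) digit sum of $u$, then $u^{n/d}$ has digit sum $(n/d)s=d^{-1}ns$, so the orbit of $u$ contributes a blue cycle of $\Lambda(n,q,t)$ exactly when $d^{-1}ns\equiv t\pmod q$; summing the count $L_q(d,s)$ of such Lyndon words over the admissible residues $s$ gives the stated formula for $h_d$. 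The only genuine work is the bookkeeping in this last step --- matching the ``Lyndon word with prescribed digit sum'' count \eqref{eqn_L} of the appendix to the orbit count, and tracking how the digit-sum/trace condition transforms under passing from $u$ to $u^{n/d}$ --- and I would rely on the standard facts about necklaces and Lyndon words assembled in Appendix~\ref{s_necklaces} rather than reprove them here.
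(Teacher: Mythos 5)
Your proof is correct and follows essentially the same route as the paper: identify $\Lambda^0$ with $A^n_t$, observe that $B\Lambda$ is the functional graph of the rotation, so its cycles are the necklaces, and count the $d$-cycles as Lyndon words of length $d$ whose trace $s$ satisfies $(n/d)s\equiv t\pmod{q}$. The only difference is that you verify the edge counts ($|v\Lambdared u|=1$ and $|v\Lambdablue|=|\Lambdablue v|=1$) directly from the description of degree-$e_1$ and degree-$e_2$ paths as labellings of $T(e_1)$ and $T(e_2)$, whereas the paper simply cites Proposition~3.5 of \cite{PRW} for these facts; your version is slightly more self-contained but otherwise identical.
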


\begin{proof}Proposition~3.5 of \cite{PRW} says that $|\Lambda^0|=q^{n-1}$ and
$|v\Lambdared|=|\Lambdared v|=q^{n-1}$ for all $v\in\Lambda^0$. Since $|v\Lambdared u|\leq 1$ for all $u,v\in\Lambda^0$, this forces $|v\Lambdared u|=1$ for all $u,v\in\Lambda^0$. For
(2), we identify each vertex $v\in\Lambda^0$ with its image written
as the concatenation $v(0)v(e_1)v(2e_1)\cdots v((n-1)e_1)$, which is
an element of the set $A^n_t$. For (3), $B\Lambda$ must consist of
disjoint cycles since $|v\Lambdablue|=|\Lambdablue v|=1$ for all
$v\in\Lambda^0$ by \cite[Proposition~3.5]{PRW}. The equivalence
classes of $A^n_t$ under the rotation $\rho$ defined in 
Appendix~\ref{s_necklaces} are called necklaces. Under the
identification $\Lambda^0=A^n_t$, the cycles in $B\Lambda$
correspond to the necklaces of $A^n_t$, and the length of each cycle
equals the period of the corresponding necklace which must divide
$n$. Suppose $[a]$ is a necklace with period $d$ and let $b$ be its
Lyndon subword. Then since $t=\trace(a)=\trace(b^{n/d})$, we have
$n/d \times \trace(b)=t$. So the number of $d$-cycles in $B\Lambda$
equals the number of Lyndon words of length $d$ whose trace
$s\in\Zmod{q}$ satisfies $d^{-1}sn\equiv t\pmod{q}$.
\end{proof}

\begin{example}\label{ex_6domino}
Suppose $\Lambda=\Lambda(6,2,0)$. The divisors of $6$ are $d_1=1$,
$d_2=2$, $d_3=3$, $d_4=6$. The constants of
Proposition~\ref{prop_dominographstructure}(3) are $h_1=2$, $h_2=0$,
$h_3=2$, $h_4=4$, and so $B\Lambda$ consists of two $1$-cycles, two
$3$-cycles and four $6$-cycles. Identifying the vertex set of
$\Lambda(6,2,0)$ with the set $A^6_0$ of words of length $6$ over
$A=\{0,1\}$ with trace $0\pmod{2}$, the cycles in $B\Lambda$
correspond to the necklaces of $A^6_0$ which are listed in the first
column of Table~\ref{table_6necklaces} in Appendix~\ref{s_necklaces}. 
For example, the $3$-cycle in the diagram below corresponds to the
necklace $[011011]$ with period $3$ of \eqref{diag_necklace}.
\[ \xygraph{{\sixdominovtx{0.5}{1}{1}{0}{1}{1}{0}}="v1":@/_{20pt}/[dll]{\sixdominovtx{0.5}{0}{1}{1}{0}{1}{1}}="v2":@/_{20pt}/[rrrr]{\sixdominovtx{0.5}{1}{0}{1}{1}{0}{1}}="v3"
"v3":@/_{20pt}/"v1" }\]
\end{example}

\begin{lemma}\label{lemma_sigma=id}
Suppose $\Lambda=\Lambda(n,q,t)$ is a domino $2$-graph. Define $\sigma:\Lambda^0\to\Lambda^0$ by
\begin{equation}\label{eqn_sigma}
\sigma(v)=s(e) \text{ where } e\in v\Lambdablue.
\end{equation} Then $\sigma$ is a bijection. Further, $\sigma$ is the identity if $n=1$ or
$(n,q,t)=(2,2,0)$, and otherwise $\sigma$ has order $n$.
\end{lemma}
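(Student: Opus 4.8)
The plan is to make the map $\sigma$ completely explicit, and then reduce the three assertions to an elementary statement about cyclic shifts of finite words.

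First I would identify the blue edges. Under the identification $\Lambda^0=A^n_t$ of Proposition~\ref{prop_dominographstructure}(2), a path of degree $e_1$ is a function $a_0a_1\cdots a_n$ on $\{0,e_1,\dots,ne_1\}$ whose two length-$n$ windows $a_0\cdots a_{n-1}$ and $a_1\cdots a_n$ are both vertices; subtracting the two trace conditions gives $a_n=a_0$. Hence the unique blue edge $e$ with $r(e)=v=v_0v_1\cdots v_{n-1}$ has $s(e)=v_1v_2\cdots v_{n-1}v_0$, so that $\sigma$ is the left cyclic shift of words (equivalently, up to inversion, the rotation $\rho$ of Appendix~\ref{s_necklaces}; Proposition~\ref{prop_dominographstructure}(3) is then just the statement that the cycles of $\sigma$ have length dividing $n$). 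Since the cyclic shift is a bijection of the set of all length-$n$ words that preserves the sum of the entries, it restricts to a bijection of $A^n_t$; this proves the first assertion and shows $\sigma^n=\id$, so the order of $\sigma$ divides $n$.

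It remains to pin down the order. If $n=1$ then $\sigma$ is visibly the identity; if $(n,q,t)=(2,2,0)$ then $A^2_0=\{00,11\}$ and both words are fixed, so again $\sigma=\id$. In all other cases I would exhibit a vertex whose $\sigma$-orbit has size exactly $n$ --- that is, an aperiodic word of length $n$ and trace $t$ --- since such a vertex forces a length-$n$ cycle in $\sigma$ and hence makes the order of $\sigma$ equal to $n$. For $n\ge3$ the word $v=(t-1,1,0,\dots,0)$ works: it has trace $t$, and if its minimal period were a proper divisor $d$ of $n$, then $d\le n-2$, so the entries of $v$ at positions $1$ and $1+d$ (both legitimate, as $2\le1+d\le n-1$) would have to agree, giving $1=0$ in $\Zmod q$, which is absurd. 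For $n=2$ I need $a,b\in\Zmod q$ with $a+b=t$ and $a\ne b$, and no such pair exists exactly when $2a=t$ for every $a\in\Zmod q$, i.e.\ exactly when $q=2$ and $t=0$ --- the case already set aside.

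I do not expect a genuine obstacle: the only step requiring a little care is the verification in the last paragraph that $(2,2,0)$ is the sole small configuration admitting no aperiodic word, and that is a one-line computation; everything else is routine once $\sigma$ has been recognised as a cyclic shift.
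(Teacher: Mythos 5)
Your proof is correct and follows essentially the same route as the paper: identify $\sigma$ with the cyclic rotation $\rho$ on $A^n_t$ (whence bijectivity and $\sigma^n=\id$), dispose of $n=1$ and $(n,q,t)=(2,2,0)$ by inspection, and otherwise exhibit an aperiodic word of length $n$ and trace $t$ to force an $n$-cycle. Your single uniform witness $(t-1,1,0,\dots,0)$ for $n\ge 3$, together with the explicit $n=2$ analysis, is in fact slightly more robust than the paper's, which splits into $t\neq 0$ (witness $t\,0\cdots 0$) and $t=0$ (witness $1\,1\,0\cdots 0$, whose trace is $2$ rather than $0$ when $q>2$ --- a small slip your version quietly avoids).
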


\begin{proof}
Under the identification $\Lambda^0=A^n_t$ of
Proposition~\ref{prop_dominographstructure}(2), $\sigma$ corresponds
to $\rho$ defined in Appendix~\ref{s_necklaces}. There is a unique blue
edge entering and leaving each vertex by
Proposition~\ref{prop_dominographstructure}(3), and so $\sigma$ is a
bijection.

The order of a vertex $v\in\Lambda^0$ under $\sigma$ is the
length of the cycle on which $v$ lies. The blue graph of $\Lambda(1,q,t)$ has only one vertex
$\beginpicture \setcoordinatesystem units <0.5cm,0.5cm> \setplotarea
x from 0 to 1, y from 0 to 1 \setlinear \plot 0 0 1 0 / \plot 0 0 0
1 / \plot 1 0 1 1 / \plot 0 1 1 1 / \put{$t$} at 0.5 0.5
\endpicture$ and one blue edge $\twodomino{0.5}{$t$}{$t$}$, which is a $1$-cycle, so $\sigma=\id$. The blue graph of $\Lambda(2,2,0)$ has two
vertices $\twodomino{0.5}{0}{0}$ and $\twodomino{0.5}{1}{1}$, and
the only blue edges $\threedomino{0.5}{0}{0}{0}$ and
$\threedomino{0.5}{1}{1}{1}$ are $1$-cycles, so $\sigma=\id$.

Otherwise, note that $\sigma$ has order $n$ if and only if
$B\Lambda$ contains an $n$-cycle. If $t\neq 0$, then the vertex
$v$ with $v(0)=t$ and $v(ie_1)=0$ for $1\leq i\leq n-1$ lies on an
$n$-cycle. If $t=0$, then the vertex $u$ with $u(0)=u(e_1)=1$, and
$u(ie_1)=0$ for $2\leq i\leq n-1$ lies on an $n$-cycle.
\end{proof}

The permutation $\sigma$ satisfies $\sigma(r(e))=s(e)$, and hence moves against the direction of the edges. For
example, in $\Lambda(6,2,0)$, $\sigma$ takes $\sixdominovtx{0.5}{0}{1}{1}{0}{1}{1}$ to $\sixdominovtx{0.5}{1}{1}{0}{1}{1}{0}$.

Let $E$ and $F$ be $1$-graphs. The product graph
$(E\times F,d)$ is the $2$-graph consisting
of the product category $E\times F$ with the degree
map $d(\lambda_1,\lambda_2)=(d(\lambda_1),d(\lambda_2))$. The following lemma says that the domino graphs for which $\sigma$ is
the identity are the only ones which are product graphs.

\begin{lemma}\label{lemma_productgraphs}
The domino graph $\Lambda(n,q,t)$ is a product graph if and
only if $n=1$, in which case $\Lambda\cong
K_1\times K_1$, or $(n,q,t)=(2,2,0)$, in which case $\Lambda\cong K_1\times K_2$.
\end{lemma}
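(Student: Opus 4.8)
The plan is to prove both implications by examining how a product structure on $\Lambda = \Lambda(n,q,t)$ would constrain its skeleton, and then matching the small cases against the explicit structure from Proposition~\ref{prop_dominographstructure} and Lemma~\ref{lemma_sigma=id}. First I would establish the easy direction: if $n=1$ then $\Lambda(1,q,t)$ has a single vertex, a single blue loop, and a single red loop (by Proposition~\ref{prop_dominographstructure}(1) the red graph is $K_{q^0}=K_1$), and this is visibly the product $K_1\times K_1$; and for $(n,q,t)=(2,2,0)$ one checks directly from Proposition~\ref{prop_dominographstructure} that $\Lambda^0$ has two vertices $\twodomino{0.5}{0}{0}$ and $\twodomino{0.5}{1}{1}$, that $R\Lambda = K_2$, and that $B\Lambda$ consists of two $1$-cycles (this is exactly the $h_1=2$, $h_2=0$ computation one gets from the divisors of $2$); so $B\Lambda = K_2 \sqcup$-free, i.e. $B\Lambda$ is $K_1 \sqcup K_1$ with a loop at each vertex, which is $K_2$'s complement — more precisely $B\Lambda$ is the disjoint union of two trivial graphs-with-loop, so $\Lambda \cong K_2 \times K_1$ after identifying which factor carries which colour. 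I would verify the factorisation property and composition agree with the product-category structure in each case (a finite check).

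For the converse, suppose $\Lambda(n,q,t) \cong E \times F$ as $2$-graphs, where $E$ is a $1$-graph with only blue edges and $F$ is a $1$-graph with only red edges (after possibly swapping colours). The product structure forces $B\Lambda \cong E^0 \text{ copies of }\cdots$ — more usefully, in a product graph $E\times F$ the commuting squares are all ``trivial'': for a blue edge $e$ and red edge $g$, the unique square containing the blue-red path determined by $e$ and $g$ has the same blue edge $e$ on both the top and bottom, and the same red edge $g$ on both sides. Concretely, in $E\times F$ one has $s(e) \in E^0$ and the blue edge out of any vertex $(v, w)$ is $(e_v, w)$ where $e_v$ is ``the'' blue edge out of $v$ in $E$; so the permutation $\sigma$ of Lemma~\ref{lemma_sigma=id} — which sends $r$-of-a-blue-edge to $s$-of-that-blue-edge — would be induced purely by the blue graph $E$, and in particular would be compatible with the product decomposition. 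The cleanest route: I would show that $\Lambda$ is a product graph if and only if $\sigma = \mathrm{id}$. One direction is immediate from the previous paragraph's description of $K_1\times K_1$ and $K_1 \times K_2$, in both of which $\sigma = \mathrm{id}$. For the other direction, if $\sigma = \mathrm{id}$, then every blue edge is a loop, $B\Lambda$ is a disjoint union of $|\Lambda^0|$ loops, and since $R\Lambda = K_{q^{n-1}}$ by Proposition~\ref{prop_dominographstructure}(1), I claim the unique commuting square on each blue-red pair $(e, g)$ must be the trivial one: the top and bottom blue edges are both loops, hence equal iff their common vertex is determined — and the factorisation property plus the fact that there is exactly one blue edge (a loop) at each vertex forces this. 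Then $\Lambda \cong B\Lambda$-as-a-bouquet $\times\, R\Lambda$, and by Lemma~\ref{lemma_sigma=id} this happens exactly when $n=1$ or $(n,q,t)=(2,2,0)$; in the first case $R\Lambda = K_1$ so $\Lambda \cong K_1\times K_1$, and in the second $R\Lambda = K_2$ so $\Lambda \cong K_1 \times K_2$.

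The main obstacle I anticipate is the step ``$\Lambda$ a product graph $\Rightarrow \sigma = \mathrm{id}$'': one must argue that in any $2$-graph isomorphism $\Lambda \cong E \times F$, the blue edges of $\Lambda$ must all be loops. This is where the specific structure of domino graphs enters, not just formal nonsense about products. The key point is that in a product $E\times F$, for a fixed red edge $g$, the map $e \mapsto (\text{top blue edge of the square on } (e,g))$ is the identity on blue edges; combined with Proposition~\ref{prop_dominographstructure}(1) (there is a red edge between every ordered pair of vertices, and exactly one), this rigidity propagates: the blue edge out of $v$ and the blue edge out of $r(g)$ where $s(g)=v$ must be ``the same'' blue edge of $E$, which forces $s(\text{blue out of }v) = r(\text{blue out of }v)$ after chasing around. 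I would spell this out carefully, since it is the crux. Alternatively — and this may be shorter — I would invoke the commuting-square count: in a product graph each blue edge appears in a commuting square with each red edge, and the blue edge on the opposite side of the square equals the original; but in a domino graph the blue edge leaving $s(g)$ and the blue edge leaving $r(g)$ are the two blue edges of a single square, so they coincide for all $g$; since $R\Lambda$ is complete, varying $g$ shows all blue edges out of all vertices coincide with the blue edges leaving their red-successors, and strong connectedness of $R\Lambda$ then forces every blue edge to be a loop, i.e. $\sigma = \mathrm{id}$.
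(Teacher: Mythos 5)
Your overall architecture is the same as the paper's: verify the two small cases directly, and for the converse reduce ``product graph'' to ``every blue edge is a loop'', i.e.\ $\sigma=\id$, and then quote Lemma~\ref{lemma_sigma=id}. The ``if'' direction and the final appeal to Lemma~\ref{lemma_sigma=id} are fine. The problem is the crux step you yourself single out, and as written it rests on a false claim. In a product $E\times F$ (with $E$ the blue factor), the two blue edges of the commuting square determined by $e\in E^1$ and $f\in F^1$ are $(e,s(f))$ and $(e,r(f))$: they have the same $E$-component, but they are \emph{distinct} edges of the skeleton whenever $s(f)\neq r(f)$. So ``the blue edge on the opposite side of the square equals the original'' is not true --- it already fails in $K_1\times K_2$, where the square over a red edge joining the two vertices has the two distinct blue loops as its sides. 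Consequently ``the blue edges at $s(g)$ and $r(g)$ coincide for all $g$'' does not follow, and the chain ``varying $g$ over the complete red graph shows all blue edges coincide, hence are loops'' breaks at its first link.

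The argument can be repaired: what the product structure does give is that the two blue edges of every square have the same $E$-component, so completeness of $R\Lambda$ shows that all blue edges of $\Lambda$ project to a single edge $e_0$ of $E$; since every vertex $(u,x)$ emits exactly one blue edge, namely $(e_0,x)$, every $u\in E^0$ must equal $s(e_0)$, whence $|E^0|=1$, $e_0$ is a loop, and $\sigma=\id$. But note that the paper gets to $|E^0|=1$ in one line without touching the commuting squares at all: a red edge of $E\times F$ has the form $(u,f)$ and joins $(u,s(f))$ to $(u,r(f))$, so it never changes the $E$-coordinate; since $R\Lambda$ is connected (in fact complete), $|E^0|=1$ immediately, and then the ``at most one edge of each colour between any pair of vertices'' property forces $E=K_1$ and hence a blue loop at every vertex. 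You should either adopt that shortcut or rewrite your square-chasing argument keeping careful track of the difference between ``equal as edges of $\Lambda$'' and ``equal $E$-components''.
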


\begin{proof}
Suppose $\Lambda$ is a product graph $E\times F$. Then
\[\Lambdared=(E\times F)^{e_2}=\{(v,f):v\in E^0,f\in F^1\}\]
with $s(v,f)=(v,s(f))$ and $r(v,f)=(v,r(f))$. Since $R\Lambda$ is connected
(Proposition~\ref{prop_dominographstructure}), we must have
$|E^0|=1$. Then $|F^0|=q^{n-1}$ since
\[|E^0|\times |F^0|=|E^0\times F^0|=|(E\times F)^0|=|\Lambda^0|=q^{n-1},\]
and so $B\Lambda$ has $q^{n-1}$ copies of (the underlying directed graph of) $E$.

Recall from \cite[Proposition~3.5(b)]{PRW} that there is at most one
edge of each colour between any pair of vertices in the skeleton of
$\Lambda$. So since $E$ has only one vertex, there is either no edge
or exactly one edge, which has to be a loop. If $E$ had no edge, then $B\Lambda$
would have no edges, so $E$ must be $E=K_1$.
Since $B\Lambda$ has $q^{n-1}$ copies of $E=K_1$, there is a blue loop at
each vertex. Thus $\sigma=\id$ and Lemma~\ref{lemma_sigma=id}
implies that either $n=1$ or $(n,q,t)=(2,2,0)$. If $n=1$, then
Proposition~\ref{prop_dominographstructure} implies that $h_1=1$ and
$F=K_1$; if $(n,q,t)=(2,2,0)$, then $h_1=2$ and $F=K_2$.
\end{proof}

Suppose $\alpha$ is an action of $\Z^l$ on a $k$-graph $\Lambda$.
Farthing, Pask and Sims \cite{FPS} constructed a \emph{crossed product
$(k+l)$-graph} $\Lambda\times_\alpha \Z^l$, whose underlying set is the cartesian product $\Lambda\times\N$, and which has degree map\footnote{This is slightly different from the definition in \cite{FPS}, where the degree map is defined by $d(\lambda,m):=(d(\lambda),m)$. The change has the effect of repainting the red edges blue and vice-versa; we have changed to ensure that  the isomorphism of Proposition~\ref{prop_lambdaisacrossedproduct} matches red edges with red edges.} defined by $d(\lambda,m):=(m,d(\lambda))$, range and
source maps defined by $r(\lambda,m):=(r(\lambda),0)$ and
$s(\lambda,m):=(\alpha^{-m}(s(\lambda)),0)$, and composition defined by
$(\mu,m)(\nu,n)=(\mu\alpha^m(\nu),m+n)$ when
$s(\mu,m)=r(\nu,n)$. One of the main theorems in \cite{FPS} says that 
the graph algebra $C^*(\Lambda\times _\alpha\Z^l)$ is isomorphic (in a very concrete way) to the $C^*$-algebraic crossed product $C^*(\Lambda)\times _{\tilde\alpha}\Z^l$ which we considered in \S\ref{sec-cps} \cite[Theorem~3.5]{FPS}. In our application of these ideas, we go the other way: we recognise that our domino graphs are crossed products of the form $R\Lambda\times \Z$, and then we use properties of crossed product $C^*$-algebras to study the $C^*$-algebras of domino graphs.
The action $\alpha$ is defined using the permutation $\sigma$ introduced in Lemma~\ref{lemma_sigma=id}.

\begin{prop}\label{prop_lambdaisacrossedproduct}
Suppose that $\Lambda=\Lambda(n,q,t)$ is the domino graph associated
to basic data $(n,q,t)$ with $n\geq 2$. For each $e\in
\Lambda^{e_2}$, there is a unique edge $\sigma_1(e)$ in $\Lambdared$
from $\sigma(s(e))$ to $\sigma(r(e))$, and then
$\sigma_1:\Lambdared\to\Lambdared$ is a bijection. The pair
$(\sigma,\sigma_1)$ is an automorphism of $R\Lambda$. Let $\alpha$
be the action of $\Z$ on $R\Lambda$ generated by
$(\sigma^{-1},\sigma_1^{-1})$. Then $\Lambda$ is isomorphic to the crossed
product $R\Lambda\times_\alpha\Z$.
\end{prop}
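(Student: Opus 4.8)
The strategy is to use the fact (see \cite[\S6]{KP}) that a $2$-graph is determined by its skeleton — the two directed graphs $B\Lambda$ and $R\Lambda$ on the common vertex set $\Lambda^0$ — together with the family $C$ of commuting squares, and that a colour-preserving isomorphism of skeletons carrying $C$ bijectively onto $C'$ extends uniquely to an isomorphism of $2$-graphs. So I would pin down these data for $\Lambda$ and for $R\Lambda\times_\alpha\Z$, exhibit the obvious candidate map, and check that it respects the squares.

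The first three assertions are quick. By Proposition~\ref{prop_dominographstructure}(1), $R\Lambda=K_{q^{n-1}}$, so there is exactly one red edge with each prescribed source and range; in particular the red edge $\sigma_1(e)$ from $\sigma(s(e))$ to $\sigma(r(e))$ exists and is unique, and since $\sigma$ is a bijection of $\Lambda^0$ (Lemma~\ref{lemma_sigma=id}) and a red edge is determined by the pair $(s(e),r(e))$, the map $e\mapsto\sigma_1(e)$ is a bijection of $\Lambda^{e_2}$. By construction $r\circ\sigma_1=\sigma\circ r$ and $s\circ\sigma_1=\sigma\circ s$ on edges, so $(\sigma,\sigma_1)$ is a bijective endomorphism of the directed graph $R\Lambda$, hence lies in $\Aut R\Lambda$; therefore $(\sigma^{-1},\sigma_1^{-1})$ generates an action $\alpha$ of $\Z$.

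Next I would write down the skeleton of $R\Lambda\times_\alpha\Z$. With the degree convention $d(\lambda,m)=(m,d(\lambda))$, the vertices are the pairs $(v,0)$ with $v\in\Lambda^0$, the blue edges are the $(v,1)$ with $r(v,1)=(v,0)$ and $s(v,1)=(\alpha^{-1}(v),0)=(\sigma(v),0)$, and the red edges are the $(e,0)$ with $e\in\Lambda^{e_2}$, $r(e,0)=(r(e),0)$ and $s(e,0)=(s(e),0)$. On the $\Lambda$ side, each vertex $v$ receives exactly one blue edge, say $\beta_v$, and $s(\beta_v)=\sigma(v)$ by the definition of $\sigma$. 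I then define $\phi$ on the skeleton by $\phi(v)=(v,0)$, $\phi(\beta_v)=(v,1)$, and $\phi(e)=(e,0)$ for $e\in\Lambda^{e_2}$; comparing the formulas above, $\phi$ is a colour-preserving bijection on vertices and on each colour of edges and commutes with the range and source maps.

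The heart of the matter is that $\phi$ matches commuting squares with commuting squares. A blue--red path of $\Lambda$ has the form $\beta_w e$ with $e\in\Lambda^{e_2}$ and $\sigma(w)=s(\beta_w)=r(e)$, i.e.\ $w=\sigma^{-1}(r(e))$. Its red--blue factorisation must be $\beta_w e=\sigma_1^{-1}(e)\,\beta_{\sigma^{-1}(s(e))}$: the blue edge in the factorisation is forced to be the one with source $s(\beta_w e)=s(e)$, namely $\beta_{\sigma^{-1}(s(e))}$, and then by uniqueness of red edges in $K_{q^{n-1}}$ the red edge is forced to be the one with range $w$ and source $\sigma^{-1}(s(e))$, which is precisely $\sigma_1^{-1}(e)$. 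On the other side, the composition rule $(\mu,m)(\nu,n)=(\mu\alpha^m(\nu),m+n)$ gives
\[
(w,1)(e,0)=(w\,\sigma_1^{-1}(e),1)=(\sigma_1^{-1}(e),1)=(\sigma_1^{-1}(e),0)(\sigma^{-1}(s(e)),1),
\]
so that $\phi(\beta_w)\phi(e)=\phi(\sigma_1^{-1}(e))\phi(\beta_{\sigma^{-1}(s(e))})$; thus $\phi$ carries this square to a square, and since $\phi$ is bijective on edges it restricts to a bijection between the blue--red paths, hence between the commuting squares, of the two $2$-graphs. Therefore $\phi$ extends uniquely to the desired isomorphism $\Lambda\cong R\Lambda\times_\alpha\Z$. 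The only genuine work is this last step — keeping the various $\sigma^{\pm1}$ and $\sigma_1^{\pm1}$ straight — while everything else is forced by the completeness of $R\Lambda$.
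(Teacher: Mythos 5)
Your proposal is correct and follows essentially the same route as the paper: establish that $(\sigma,\sigma_1)$ is an automorphism of the complete red graph, define the colour-preserving skeleton bijection $v\mapsto(v,0)$, $\beta_v\mapsto(v,1)$, $e\mapsto(e,0)$, verify that it matches commuting squares (your computation $\beta_w e=\sigma_1^{-1}(e)\,\beta_{\sigma^{-1}(s(e))}\mapsto(\sigma_1^{-1}(e),1)$ agrees with the paper's, which parametrises the same squares by the red edge $\rho=\sigma_1^{-1}(e)$ of the red--blue factorisation), and invoke \cite[\S6]{KP} to extend uniquely to a $2$-graph isomorphism. The only cosmetic difference is that you deduce bijectivity of $\sigma_1$ directly from completeness of $R\Lambda$ and bijectivity of $\sigma$, where the paper gives a slightly longer hands-on injectivity/surjectivity argument.
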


\begin{proof}
Since $R\Lambda$ is complete (by
Proposition~\ref{prop_dominographstructure}), there is exactly one
red edge between every pair of vertices in $\Lambda^0$, so
$\sigma_1$ is well-defined. To see that $\sigma_1$ is surjective,
let $f\in \Lambdared$. Then there exist unique blue edges $f_1,f_2$
with $s(f_1)=r(f)$ and $s(f_2)=s(f)$; take $e$ to be the unique edge
from $r(f_2)$ to $r(f_1)$, and then we have
$\sigma(r(e))=s(f_1)=r(f)$, $\sigma(s(e))=s(f_2)=s(f)$ and
$\sigma_1(e)=f$. To see that $\sigma_1$ is injective, suppose that
$\sigma_1(e)=\sigma_1(h)$. Then
$s(e)=\sigma^{-1}(s(\sigma_1(e)))=\sigma^{-1}(s(\sigma_1(h)))=s(h)$
and $r(e)=r(h)$, and since there is exactly one red edge between two
given vertices, we must have $e=h$. The pair $(\sigma,\sigma_1)$ is
an automorphism of $R\Lambda$ since $\sigma$ and $\sigma_1$ are
bijections and we have $s(\sigma_1(e))=\sigma(s(e))$ and
$r(\sigma_1(e))=\sigma(r(e))$ by definition. 

We build a coloured graph isomorphism $\phi$ from the skeleton of
$\Lambda$ to the skeleton of $R\Lambda\times_\alpha\Z$ and prove
that it preserves commuting squares. Then by \cite[\S6]{KP}, $\phi$
extends uniquely to a $2$-graph isomorphism, and the result follows.

We define
\begin{equation}\label{defiso}
\begin{cases}\phi_0:\Lambda^0\to(R\Lambda\times_\alpha\Z)^0\  \text{ by }\ \phi_0(v)=(v,0) \text{ for } v\in \Lambda^0,\\
\phi_1:\Lambda^{e_1}\to(R\Lambda\times_\alpha\Z)^{e_1}\ \text{ by }\ \phi_1(\beta)=(r(\beta),1) \text{ for } \beta\in\Lambdablue, \text{ and }\\
\phi_2:\Lambda^{e_2}\to(R\Lambda\times_\alpha\Z)^{e_2}\ \text{ by }\
\phi_2(\rho)=(\rho,0) \text{ for } \rho\in\Lambdared.
\end{cases}
\end{equation}
Then $\phi_0$ and $\phi_2$ are bijections because
$(R\Lambda\times_\alpha\Z)^0=\Lambda^0\times\{0\}$ and
$(R\Lambda\times_{\alpha}\Z)^{e_2}=\Lambdared\times\{0\}$. To see
that $\phi_1$ is a bijection, note that
\[(R\Lambda\times_\alpha\Z)^{e_1}=\{(r(\beta),1):\beta\in\Lambdablue\}=\{(v,1):v\in\Lambda^0\}\]
since $|v\Lambdablue|=1$ for all $v\in\Lambda^0$, and let
$\beta\in\Lambdablue$. Then $\phi_1(\beta)=(r(\beta),1)$ is the
unique edge with range $r(r(\beta),1)=(r(\beta),0)$ and source
$s(r(\beta),1)=(\alpha^{-1}(r(\beta)),0)=(s(\beta),0)$. Hence $\phi_1$ is
a bijection. So $\phi=(\phi_0,\phi_1,\phi_2)$ is an isomorphism and
it remains to show that it preserves commuting squares.

Every commuting square $\lambda\in\Lambda^{(1,1)}$ is uniquely
determined by the red edge $\lambda(0,e_2)=\lambda|_{T(e_2)}$. To
see this, let $\rho\in\Lambdared$. There are unique blue edges
$\beta_1$ and $\beta_2$ with $r(\beta_1)=r(\rho)$ and
$s(\beta_2)=s(\rho)$, and we then have
$s(\beta_1)=\alpha^{-1}(r(\beta_1))$ and $s(\beta_2)=\alpha^{-1}(r(\beta_2))$.
There is a unique red edge from $s(\beta_2)$ to $s(\beta_1)$, and it
must be $\alpha^{-1}(\rho)$ since
\begin{align*}
r(\alpha^{-1}(\rho))&=\alpha^{-1}(r(\rho))=\alpha^{-1}(r(\beta_1))=s(\beta_1), \text{ and}\\
s(\alpha^{-1}(\rho))&=\alpha^{-1}(s(\rho))=\alpha^{-1}(r(\beta_2))=s(\beta_2).
\end{align*}
So each $\rho\in\Lambdared$ determines a commuting square
$\rho\beta_2=\beta_1\alpha^{-1}(\rho)$ and every commuting square
$\lambda$ in $\Lambda$ has this form with $\rho:=\lambda|_{T(e_2)}$.

Every commuting square in $(R\Lambda\times_\alpha\Z)^{(1,1)}$ has
the form $(\rho,1)$ for some $\rho\in \Lambdared$ and has
factorisations
\[(\rho,0)(s(\rho),1)=(r(\rho),1)(\alpha^{-1}(\rho),0).\]

We will show that $\phi$ maps the commuting square
$\lambda\in\Lambda^{(1,1)}$ to the commuting square
$(\lambda|_{T(e_2)},1)$ in $R\Lambda\times_\alpha\Z$. Let
$\rho:=\lambda|_{T(e_2)}$. In pictures,
\[ \xygraph{\bullet="b":@{-}|@{>}[l] \bullet="a"_{\beta_2}
"b":@{--}|@{>}[d]\bullet="c"^{\alpha^{-1}(\rho)}
"a":@{--}|@{>}[d]\bullet="z"_{\rho} "c":@{-}|@{>}"z"^{\beta_1} }
\beginpicture \setcoordinatesystem units <1cm,1cm>
\setplotarea x from -0.5 to 1.5, y from 0 to 0 \setlinear
\put{$\overset{\phi}{\longrightarrow}$} at 0.5 -0.5
\endpicture
\xygraph{\bullet="b":@{-}|@{>}[l] \bullet="a"_{(s(\rho),1)}
"b":@{--}|@{>}[d]\bullet="c"^{(\alpha^{-1}(\rho),0)}
"a":@{--}|@{>}[d]\bullet="z"_{(\rho,0)}
"c":@{-}|@{>}"z"^{(r(\rho),1)} }\] We have $\phi_2(\rho)=(\rho,0)$
and $\phi_2(\alpha^{-1}(\rho))=(\alpha^{-1}(\rho),0)$ by definition, and
$\phi_1(\beta_2)=(r(\beta_2),1)=(s(\rho),1)$ and
$\phi_1(\beta_1)=(r(\beta_1),1)=(r(\rho),1)$. So $\phi(\lambda)$ has
factorisations $(\rho,0)(s(\rho),1)=(r(\rho),1)(\alpha^{-1}(\rho),0)$
which gives $\phi(\lambda)=(\rho,1)=(\lambda|_{T(e_2)},1)$.
\end{proof}

\section{The $\cstar$-algebras of domino graphs} \label{S_c*-algebras}

We now use what we know about the combinatorics of dominoes  to describe the $C^*$-algebra of a domino graph $\Lambda(n,q,t)$. If $n=1$ or $(n,q,t)=(2,2,0)$, then $\Lambda(n,q,t)$ is a product $2$-graph $E\times F$, and $C^*(\Lambda(n,q,t))$ is isomorphic to $C^*(E)\otimes C^*(F)$. For $n=1$, both $E$ and $F$ consist of a single loop, both $C^*(E)$ and $C^*(F)$ are isomorphic to $C(\T)$, and $C^*(\Lambda(1,q,t))$ is isomorphic to $C(\T)\otimes C(\T)=C(\T^2)$. For $(n,q,t)=(2,2,0)$, $E$ is isomorphic to $K_1$ and $F$ to $K_2$, and  $C^*(\Lambda(2,2,0))$ is isomorphic to $C(\T)\otimes C^*(K_2)=C(\T)\otimes \O_2$.

The remaining cases are handled by the following theorem. 
\begin{theorem}\label{thm_cstaralg}
Suppose that $(n,q,t)$ is  a set of basic data with $n\geq 2$ and $(n,q,t)\not=(2,2,0)$, and let $\alpha$ be the action of $\Z$ on $R\Lambda$ described in Proposition~\ref{prop_lambdaisacrossedproduct}. Let $\O_{\Lambda^0}:=C^*(t_v:v\in\Lambda^0)$ be the Cuntz algebra of the finite set $\Lambda^0=\Lambda^0(n,q,t)$, and let $\gamma$ be the automorphism of $\O_{\Lambda^0}$ such that $\gamma(t_v)=t_{\alpha_1(v)}$. Then $\gamma$ has order $n$, hence induces an action $\gamma$ of $Z/n\Z$ on $\O_{\Lambda^0}$, and there is an isomorphism $\Theta$ of $C^*(\Lambda(n,q,t))$ onto the mapping cylinder
\[
M(\hat\gamma_{e^{2\pi i/n}})=\big\{f\in C\big([0,1],\O_{\Lambda^0}\times_\gamma(\Z/n\Z)\big):f(1)=\hat\gamma_{e^{2\pi i/n}}(f(0))\big\}
\]
such that
\begin{align}
\Theta(s_v)(t)&=i_{\O_{\Lambda^0}}(t_vt_v^*)\quad\text{for $v\in\Lambda^0$,}\label{Theta1}\\
\Theta(s_e)(t)&=e^{2\pi it/n}i_{\O_{\Lambda^0}}(t_{r(e)}t_{r(e)}^*)i_{\Z/n\Z}(1+n\Z)\quad\text{for $e\in\Lambda^{e_1}$, and}\label{Theta2}\\
\Theta(s_f)(t)&=i_{\O_{\Lambda^0}}(t_{r(f)}t_{s(f)}t_{s(f)}^*)\quad\text{for $f\in\Lambda^{e_2}$.}\label{Theta3}
\end{align}
The crossed product $\O_{\Lambda^0}\times_\gamma(\Z/n\Z)$ is simple and the primitive ideal space of $C^*(\Lambda(n,q,t))$ is homeomorphic to $\T$.
\end{theorem}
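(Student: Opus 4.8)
The plan is to assemble the theorem from the machinery of Sections~\ref{sec-cps} and \ref{s_dominographs}. By Proposition~\ref{prop_lambdaisacrossedproduct}, $\Lambda=\Lambda(n,q,t)$ is isomorphic to the crossed product $2$-graph $R\Lambda\times_\alpha\Z$, so by \cite[Theorem~3.5]{FPS} there is a concrete isomorphism of $C^*(\Lambda)$ onto $C^*(R\Lambda)\times_{\tilde\alpha}\Z$. Now $R\Lambda$ is the complete graph $K_{q^{n-1}}$ by Proposition~\ref{prop_dominographstructure}(1), so $C^*(R\Lambda)\cong\O_{q^{n-1}}$; under this identification the canonical generators $s_v$ for $v\in\Lambda^0$ become the projections $t_vt_v^*$, and $s_f$ for $f\in\Lambdared$ becomes $t_{r(f)}t_{s(f)}^*$. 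The action $\tilde\alpha$ on $C^*(R\Lambda)$ is generated by the automorphism induced by $(\sigma^{-1},\sigma_1^{-1})$, which (since $\sigma^{-1}=\alpha_1$) carries $t_vt_v^*$ to $t_{\alpha_1(v)}t_{\alpha_1(v)}^*$; this is exactly the restriction to the diagonal of the order-$n$ automorphism $\gamma$ of $\O_{\Lambda^0}:=C^*(t_v:v\in\Lambda^0)$. By Lemma~\ref{lemma_sigma=id}, since $n\geq 2$ and $(n,q,t)\neq(2,2,0)$, $\sigma$ has order $n$, hence $\gamma$ does too, so $\gamma$ descends to an action of $\Z/n\Z$.

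Next I would apply Corollary~\ref{mapcylappl} to the automorphism $\tilde\alpha$ of $C^*(\Lambda)$ — more precisely to the corresponding automorphism of $\O_{q^{n-1}}$, which we have just identified — to get that $C^*(\Lambda)\cong C^*(R\Lambda)\times_{\tilde\alpha}\Z$ is isomorphic to the mapping cylinder $M(\hat{\tilde\alpha}_{e^{2\pi i/n}})$. Since the automorphism of $\O_{q^{n-1}}$ in question is precisely $\gamma$ (after identifying $\O_{q^{n-1}}$ with the Cuntz algebra $\O_{\Lambda^0}$ of the finite set $\Lambda^0$), the mapping cylinder is $M(\hat\gamma_{e^{2\pi i/n}})$ as stated. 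Tracking the generators through the chain of isomorphisms (the \cite{FPS} isomorphism, the identification $C^*(K_{q^{n-1}})\cong\O_{\Lambda^0}$, and the explicit formula $\phi(f)(t)=f(e^{-2\pi it/n})$ for the induced-algebra-to-mapping-cylinder isomorphism from the proof of Corollary~\ref{mapcylappl}) yields formulas \eqref{Theta1}--\eqref{Theta3}: the vertex projections $s_v$ go to the constant functions $t\mapsto i_{\O_{\Lambda^0}}(t_vt_v^*)$; the red edges $s_f$, which live in $C^*(R\Lambda)$ and are fixed by no power of the dual action, go to constant functions $t\mapsto i_{\O_{\Lambda^0}}(t_{r(f)}t_{s(f)}t_{s(f)}^*)$; and the blue edges $s_e$, which under the \cite{FPS} isomorphism correspond to $i_{R\Lambda\times\Z}$ of the edge $(r(e),1)$ and hence to $i_{C^*(R\Lambda)}(s_{r(e)})i_\Z(1)$, pick up the scalar $e^{2\pi it/n}$ from the dual action and become $t\mapsto e^{2\pi it/n}i_{\O_{\Lambda^0}}(t_{r(e)}t_{r(e)}^*)i_{\Z/n\Z}(1+n\Z)$.

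For the last two assertions, I would invoke Corollary~\ref{applyKish}: the graph $R\Lambda=K_{q^{n-1}}$ is $1$-connected (indeed $n$-connected for every $n$, being complete), and it is not a single loop since $q^{n-1}\geq 2$ under our hypotheses; the action $\alpha$ of $\Z$ factors as $\beta\circ q$ through the finite quotient $\Z/n\Z$ with $\beta$ injective (by Lemma~\ref{lemma_sigma=id} and the order-$n$ conclusion there, together with Proposition~\ref{prop_lambdaisacrossedproduct}). Hence $C^*(R\Lambda)\times_{\tilde\beta}(\Z/n\Z)\cong\O_{\Lambda^0}\times_\gamma(\Z/n\Z)$ is simple, and $\Prim\bigl(C^*(\Lambda)\bigr)=\Prim\bigl(C^*(R\Lambda)\times_{\tilde\alpha}\Z\bigr)$ is homeomorphic to $\T$.

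The bookkeeping — keeping straight the various identifications and chasing the generators through three composed isomorphisms, with the slightly delicate sign/inverse conventions in the \cite{FPS} crossed-product graph, the dual action, and the mapping-cylinder parametrisation $t\mapsto e^{-2\pi it/n}$ — is the only real obstacle; conceptually everything is already in place, and the one point needing genuine care is verifying that the automorphism of $C^*(R\Lambda)$ transported from $\tilde\alpha$ really is $\gamma$ (equivalently, that $\alpha_1=\sigma^{-1}$ matches the generator $(\sigma^{-1},\sigma_1^{-1})$ of the $\Z$-action in Proposition~\ref{prop_lambdaisacrossedproduct}) and that the scalar $e^{2\pi it/n}$, and not its conjugate, appears in \eqref{Theta2}.
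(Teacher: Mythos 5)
Your proposal follows essentially the same route as the paper's proof: realise $\Lambda$ as $R\Lambda\times_\alpha\Z$ via Proposition~\ref{prop_lambdaisacrossedproduct}, pass to $C^*(R\Lambda)\times_{\tilde\alpha}\Z$ by \cite[Theorem~3.5]{FPS}, apply Corollary~\ref{mapcylappl} together with the identification $C^*(K_{q^{n-1}})\cong\O_{\Lambda^0}$, and finish with Corollary~\ref{applyKish}; the generator-chasing you defer is exactly the computation the paper performs. Two small points to repair: the isomorphism $C^*(R\Lambda)\cong\O_{\Lambda^0}$ sends $s_f$ to $t_{r(f)}t_{s(f)}t_{s(f)}^*$ rather than $t_{r(f)}t_{s(f)}^*$ (the latter would violate the Cuntz--Krieger relation $p_v=\sum_{r(f)=v}s_fs_f^*$ at each vertex, though your final formula \eqref{Theta3} is the correct one); and identifying the transported action with $\gamma$ requires checking it on the generators $t_v=\sum_{r(e)=v}s_e$ of $\O_{\Lambda^0}$, not merely on the diagonal projections $t_vt_v^*$ (an automorphism of $\O_{\Lambda^0}$ is not determined by its restriction to the diagonal), which is the short equivariance calculation $\tilde\beta\big(\sum_{r(e)=v}s_e\big)=\sum_{r(f)=\sigma^{-1}(v)}s_f$ appearing in the paper.
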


\begin{proof}
Lemma~\ref{lemma_sigma=id} implies that the permutation $\sigma^{-1}=\alpha_1|_{\Lambda^0}$ has order $n$, and hence so does the induced automorphism $\gamma$ of $\O_{\Lambda^0}$. In Proposition~\ref{prop_lambdaisacrossedproduct} (and specifically in Equation~\eqref{defiso}\,) we constructed an isomorphism $\phi$ of $\Lambda(n,q,t)$ onto the crossed product $R\Lambda\times_\alpha \Z$, and this induces an isomorphism
\begin{equation}\label{theta1}
\theta_1:C^*(\Lambda)\to C^*(R\Lambda\times_\alpha\Z).
\end{equation} 
Proposition~3.1 of \cite{FPS} implies that the
action $\alpha$ of $\Z$ on $R\Lambda$ induces an action
$\tilde{\alpha}$ of $\Z$ on the $\cstar$-algebra $\cstar(R\Lambda)$
such that $\tilde{\alpha}_m(s_\lambda)=s_{\alpha_m(\lambda)}$ for
$\lambda\in\Lambda$ and $m\in \Z$, and Theorem~3.5 of \cite{FPS} gives us an isomorphism \begin{equation}\label{theta2}
\theta_2:C^*(R\Lambda\times_\alpha\Z)\to C^*(R\Lambda)\times_{\tilde\alpha}\Z,
\end{equation}
which is characterised by $\theta_2(s_{(\mu,m)})=i_A(s_\mu)i_\Z(m)$. Since the generator is induced by a permutation of order $n$, $\tilde\alpha$ factors through an an action $\beta$ of $\Z/n\Z$ such that $\beta_{m+n\Z}(s_e)=s_{\alpha_m(e)}$, and the Olesen-Pedersen theorem in the form of Corollary~\ref{mapcylappl} gives us an isomorphism 
\begin{equation}\label{theta3}
\theta_3:C^*(R\Lambda)\times_{\tilde\alpha}\Z\to M(\hat\beta_{e^{2\pi i/n}}),
\end{equation} 
where $\hat\beta$ is the dual action of $C_n$ on the crossed product $C^*(R\Lambda)\times_{\beta}(\Z/n\Z)$. 

Next we recall from Proposition~\ref {prop_dominographstructure} that $R\Lambda$ is the complete directed graph $K_{\Lambda^0}$ with vertex set $\Lambda^0$, which is the dual of the graph $E_{\Lambda^0}$ with one vertex and the edges parametrised by $\Lambda^0$. Write $\{t_v:v\in \Lambda^0\}$ for a universal Cuntz-Krieger family which generates $\O_{\Lambda^0}:=C^*(E_{\Lambda^0})$. Then we can deduce from \cite[Corollary~2.6]{R}, for example, that there is an isomorphism $\psi$ of $C^*(R\Lambda)$ onto $\O_{\Lambda^0}$ which carries the projections $p_v$ into $t_vt_v^*$ and the partial isometries $s_e$ into $t_{r(e)}t_{s(e)}t_{s(e)}^*$. The inverse $\psi^{-1}$ takes $t_v$ to $T_v:=\sum_{r(e)=v}s_e$, and hence we have
\[\beta(\psi^{-1}(t_v))=\sum_{r(e)=v}\beta(s_e)=\sum_{r(e)=v}s_{\alpha(e)}=\sum_{r(f)=\alpha_1(v)}s_f=T_{\alpha_1(v)}=\psi^{-1}(\gamma(t_v)).
\]
Thus $\psi$ induces an isomorphism $\psi\times \id$ of $C^*(R\Lambda)\times_{\beta}(\Z/n\Z)$ onto $\O_{\Lambda^0}\times_{\gamma}(\Z/n\Z)$, and composing functions with $\psi\times \id$ gives an isomorphism $\theta_4$ of $M(\hat\beta_{e^{2\pi i/n}})$ onto $M(\hat\gamma_{e^{2\pi i/n}})$. Since $\beta$ is induced by a transitive permutation of the edges of $E_{\Lambda^0}$, we know from Corollary~\ref{applyKish} that $\O_{\Lambda^0}\times_\beta (\Z/n\Z)$ is simple and that the primitive ideal space is $\T$.

At this stage, we have an isomorphism $\Theta:=\theta_4\circ\theta_3\circ\theta_2\circ\theta_1$ of $C^*(\Lambda)$ onto $M(\hat\gamma_{e^{2\pi i/n}})$, and we need to check that $\Theta$ does the right thing on generators. For $v\in \Lambda^0$, we have
\[
\Theta(s_v)=\theta_4\circ\theta_3\circ\theta_2(s_{(v,0)})=\theta_4\circ\theta_3(i_{C^*(R\Lambda)}(p_v)),
\] 
and hence for $t\in [0,1]$ we have
\begin{align}
\label{calcTheta1}\Theta(s_v)(t)
&=\psi\times\id\big(\theta_3(i_{C^*(R\Lambda)}(p_v))\big)(t)\\
&=\psi\times\id\big(\Psi(i_{C^*(R\Lambda)}(p_v))(e^{-2\pi it/n})\big)\notag\\
&=\psi\times\id\big((i_{C^*(R\Lambda)}\times(i_{\Z}\circ q))(\hat\delta_{e^{2\pi it/n}}(i_{C^*(R\Lambda)}(p_v))\big),\notag
\end{align}
where $\delta:=\beta\circ q$ is the action of $\Z$ inflated from $\beta:\Z/n\Z\to \Aut C^*(R\Lambda)$. Since the dual action fixes the range of $i_{C^*(R\Lambda)}$, we have
\[
\label{calcTheta}\Theta(p_v)(t)
=\psi\times\id\big((i_{C^*(R\Lambda)}\times(i_{\Z}\circ q))(i_{C^*(R\Lambda)}(p_v))\big)=i_{\O_{\Lambda^0}}(t_vt_v^*),
\]
which is \eqref{Theta1}. For $f\in \Lambda^{e_2}$, we have $\Theta(s_f)=\theta_4\circ\theta_3(i_{C^*(R\Lambda)}(s_f))$, and a calculation just like \eqref{calcTheta1} gives \eqref{Theta3}. Finally, for $e\in \Lambda^{e_1}$, we have
\[
\Theta(s_e)=\theta_4\circ\theta_3\circ\theta_2(s_{(r(e),1)})=\theta_4\circ\theta_3(i_{C^*(R\Lambda)}(p_{r(e)})i_\Z(1)),
\] 
and hence for $t\in [0,1]$ we have
\begin{align*}
\Theta(s_e)&=\psi\times\id\big((i_{C^*(R\Lambda)}\times(i_{\Z}\circ q))(\hat\delta_{e^{2\pi it/n}}(i_{C^*(R\Lambda)}(p_{r(e)})i_\Z(1)))\big)\\
&=\psi\times\id\big((i_{C^*(R\Lambda)}\times(i_{\Z}\circ q))(i_{C^*(R\Lambda)}(p_{r(e)})e^{2\pi it/n}i_\Z(1))\big)\\
&=\psi\times\id\big(i_{C^*(R\Lambda)}(p_{r(e)})e^{2\pi it/n}i_{\Z/n\Z}(1+n\Z)\big)\\
&=e^{2\pi it/n}i_{\O_{\Lambda^0}}(t_{r(e)}t_{r(e)}^*)i_{\Z/n\Z}(1+n\Z).\qedhere
\end{align*}
\end{proof}

\begin{remark}
It is intriguing that the other family of periodic $2$-graphs whose algebras have been analysed also have $C^*$-algebras with primitive ideal space $\T$ (see \cite[\S5]{DY}). The graphs in \cite{DY} have just one vertex, and the skeleton admits many possible families $C$ of commuting squares; for domino graphs, the skeleton admits a unique family $C$ of commuting squares.
\end{remark}

\begin{remark}
Since the group $\Z/n\Z$ is finite, Theorem~4.1 of \cite{MR} implies that the action $\gamma$ of $\Z/n\Z$ on $\O_{\Lambda^0}$ in Theorem~\ref{thm_cstaralg} is proper in the sense of Rieffel \cite{proper}; then, since $\O_{\Lambda^0}\times_\gamma (\Z/n\Z)$ is simple, Corollary~1.7 of \cite{proper} implies that $\O_{\Lambda^0}\times_\gamma (\Z/n\Z)$ is Morita equivalent to the fixed-point algebra $\O_{\Lambda^0}^\gamma$. However, the underlying action on $R\Lambda$ is not free on any $R\Lambda^n$, so the discussion in \cite[\S4]{MR} suggests that it may be hard to get useful information about this fixed-point algebra (examples there show that it need not be the $C^*$-algebra of the quotient graph, for example).
\end{remark}

\section{$K$-theory}\label{s_ktheory}

In \S7 of \cite{PRW} we conjectured, based on the numerical evidence in
\cite[Table~1]{PRW}, that
$K_0(\cstarlambda)$ and $K_1(\cstarlambda)$ are cyclic groups of the
same order. In this section we verify this for domino graphs using
the identification of $\cstarlambda$ as a crossed product.

\begin{prop}\label{prop_dominoktheory}
If $(n,q,t)$ is  basic data, then for $i=0$ and $i=1$ we have
\[K_i(C^*(\Lambda(n,q,t)))=\begin{cases}
\Z^2 & \text{ if } n=1 \\
 0 & \text{ if } (n,q,t)=(2,2,0) \\
\Zmod{(q^{n-1}-1)} & \text{ otherwise. }
\end{cases}\]
\end{prop}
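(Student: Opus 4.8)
The plan is to handle the three cases separately, the substantive one being $n\ge 2$, where I would exploit the identification of $C^*(\Lambda(n,q,t))$ as a crossed product of a Cuntz algebra by $\Z$. The case $n=1$ is immediate from Lemma~\ref{lemma_productgraphs}: there $\Lambda(1,q,t)\cong K_1\times K_1$, so $C^*(\Lambda(1,q,t))\cong C^*(K_1)\otimes C^*(K_1)\cong C(\T)\otimes C(\T)=C(\T^2)$, and hence $K_0(C(\T^2))=K_1(C(\T^2))=\Z^2$.

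For $n\ge 2$, the first step is to observe that Proposition~\ref{prop_lambdaisacrossedproduct} makes $\Lambda(n,q,t)$ isomorphic to the crossed-product $2$-graph $R\Lambda\times_\alpha\Z$, so \cite[Theorem~3.5]{FPS} yields $C^*(\Lambda(n,q,t))\cong C^*(R\Lambda)\rtimes_{\tilde\alpha}\Z$. Next, by Proposition~\ref{prop_dominographstructure}(1) the graph $R\Lambda$ is the complete graph $K_{q^{n-1}}$, which is the dual of the one-vertex graph with $q^{n-1}$ loops; so, exactly as in the proof of Theorem~\ref{thm_cstaralg} (using \cite[Corollary~2.6]{R}), $C^*(R\Lambda)$ is isomorphic to the Cuntz algebra $\O_{q^{n-1}}$, with $K_0(\O_{q^{n-1}})=\Z/(q^{n-1}-1)\Z$ generated by $[1]$ and $K_1(\O_{q^{n-1}})=0$. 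The key observation is that $\tilde\alpha_1$ is a unital automorphism, so $(\tilde\alpha_1)_{*}$ fixes $[1]$ and is therefore the identity on $K_0(\O_{q^{n-1}})$ (and trivially the identity on $K_1=0$).

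The last step is to feed this into the Pimsner--Voiculescu exact sequence for $\O_{q^{n-1}}\rtimes_{\tilde\alpha}\Z$: since $K_1(\O_{q^{n-1}})=0$ and $\id-(\tilde\alpha_1)_{*}^{-1}=0$ on $K_0(\O_{q^{n-1}})$, the six-term sequence collapses and identifies both $K_0(C^*(\Lambda(n,q,t)))$ and $K_1(C^*(\Lambda(n,q,t)))$ with $K_0(\O_{q^{n-1}})=\Z/(q^{n-1}-1)\Z$. For $(n,q,t)=(2,2,0)$ this reads $\Z/1\Z=0$, in agreement with the stated answer; alternatively one can note directly that $C^*(\Lambda(2,2,0))\cong C(\T)\otimes\O_2$ has vanishing $K$-theory by the K\"unneth theorem, because $K_*(\O_2)=0$.

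I do not anticipate a real obstacle here: the crossed-product description and the computation $C^*(R\Lambda)\cong\O_{q^{n-1}}$ are already established, and the one point demanding care is the triviality of $(\tilde\alpha_1)_{*}$ on $K$-theory, which is immediate from unitality of $\tilde\alpha_1$. The only bookkeeping nuisance is keeping straight whether $\id-(\tilde\alpha_1)_{*}$ or $\id-(\tilde\alpha_1)_{*}^{-1}$ appears in the Pimsner--Voiculescu sequence, but both vanish, so the point is moot.
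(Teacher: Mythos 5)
Your proposal is correct and follows essentially the same route as the paper: the K\"unneth theorem for the product cases, and the identification $C^*(\Lambda)\cong\O_{q^{n-1}}\rtimes_{\tilde\alpha}\Z$ followed by the Pimsner--Voiculescu sequence, with $(\tilde\alpha_1)_*=\id$ on $K_0$ because the automorphism is unital and $[1]$ generates $K_0(\O_{q^{n-1}})$. The only (harmless) embellishment is your observation that the general formula also covers $(2,2,0)$ as $\Z/1\Z=0$.
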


\begin{proof}
In the first case $\cstarlambda=C(\T)\otimes C(\T)$ by
Theorem~\ref{thm_cstaralg}. Since $K_i(C(\T))=\Z$
\cite[page~234]{RLL}, the $K$-groups of $C(\T)$ are torsion-free and
the K\"{u}nneth formula \cite{RoScho} gives
\begin{align*}
K_0(C(\T)\otimes C(\T))&= K_0(C(\T))\otimes K_0(C(\T))\oplus
K_1(C(\T))\otimes K_1(C(\T))\\
&= \Z\otimes\Z\oplus\Z\otimes\Z =\Z^2\\
K_1(C(\T)\otimes C(\T))&=K_0(C(\T))\otimes K_1(C(\T))\oplus
K_1(C(\T))\otimes K_0(C(\T)) \\
&= \Z\otimes\Z\oplus\Z\otimes\Z =\Z^2.
\end{align*}
In the second case $\cstarlambda=C(\T)\otimes\O_2$ by
Theorem~\ref{thm_cstaralg}, and since $K_i(\O_2)=0$
\cite[page~234]{RLL} the K\"{u}nneth formula gives $K_i(C(\T)\otimes
\O_2)=0$.
Otherwise, we have $\cstarlambda\cong \O_{\Lambda^0}\times_{\tilde\alpha}\Z$. Since $\O_{\Lambda^0}$ is a Cuntz algebra with $q^{n-1}$ generators, we deduce from \cite[page~234]{RLL}, for example, that
\[
K_1(\O_{\Lambda^0})=0 \text{ and } K_0(\O_{\Lambda^0})=\Z/(q^{n-1}-1)\Z,\]
where $K_0(\O_{\Lambda^0})$ is generated by the class $[1]$ of the identity.
Thus the Pimsner-Voiculescu sequence \cite{PV2} for this crossed product reduces to
\[ 0\longrightarrow K_1(\O_{\Lambda^0}\times_{\tilde\alpha}\Z) \longrightarrow K_0(\O_{\Lambda^0}) \overset{\id-\tilde\alpha_\ast}{\longrightarrow} K_0(\O_{\Lambda^0}) \longrightarrow K_0(\O_{\Lambda^0}\times_{\tilde\alpha} \Z) \longrightarrow 0, \]
and we have $\id-\tilde\alpha_\ast=0$ because $\tilde\alpha_\ast(1)=1$. So
both $K_0(\cstarlambda))=K_0(\O_{\Lambda^0}\times_{\tilde\alpha}\Z)$ and $K_1(\cstarlambda)=K_1(\O_{\Lambda^0}\times_{\tilde\alpha}\Z)$ are isomorphic to $\Zmod{(q^{n-1}-1)}$.
\end{proof}

\begin{appendix}
\section{Necklaces and Lyndon words}\label{s_necklaces}

A \emph{word} is a finite or infinite sequence of symbols from a
finite set $A$ called the \emph{alphabet}. Subsets of the set
$A^\ast$ of all finite words are called \emph{languages} and we
write $A^n$ for the words of length $n$. The product of two words
$u$ and $v$ is their concatenation $uv$, and for $k\in\N$ the $k$th
power of $u$ is $u^k$.

Consider the alphabet $A=\{0,1,\ldots,q-1\}$, and define $\rho:A^n\to A^n$ by $\rho(a_1\cdots a_n)=a_2\cdots a_n a_1$. Then the permutation group $\langle\rho\rangle$ acts on $A^n$, and
the equivalence classes under rotation are known as \emph{necklaces
of length $n$}, so called because a necklace of length $n$ can be
visualised as a regular $n$-gon where the corners represent the
``beads'', each designated one of $q$ colours. For example, if $q=2$
and $n=6$, the class of $011011$ is the necklace
$[011011]=\{011011,101101,110110\}$ and is drawn as
\begin{equation}\label{diag_necklace} \xymatrix@-1pc{ & *+[o][F-]{1} \ar@{-}[r]  & *+[o][F-]{1}\ar@{-}[dr] & \\
*+[o][F-]{0}\ar@{-}[ur] &&& *+[o][F-]{0}\ar@{-}[dl] \\
& *+[o][F-]{1}\ar@{-}[ul]  & *+[o][F-]{1}\ar@{-}[l] &}\end{equation}

The \emph{period} of a necklace $[a]$ is the smallest $d\in\N$ such
that $\rho^d(a)=a$, and it must divide the length of the necklace. A
necklace with period equal to its length is called \emph{aperiodic}
and its lexicographic least representative is known as a
\emph{Lyndon word} (originally called \emph{standard lexicographic sequences} in \cite{Lyndon}). For
example, $000011$ and $000001$ are Lyndon words of length $6$. For
every necklace $[a]$ of length $n$ and period $d$ there is a unique
subword $b$ of length $d$ such that $[b^{n/d}]=[a]$; we call $b$ the
\emph{Lyndon subword} of $[a]$. For example, the necklace $[011011]$
in \eqref{diag_necklace} has period $3$ and Lyndon subword $011$.

We identify the alphabet $A=\{0,1,\ldots,q-1\}$ with the commutative
ring $\Zmod{q}$, and let the \emph{trace} of a word be the sum of
its symbols $\pmod{q}$. We will often consider the collection
$A^n_t$ of words with length $n$ and trace $t\pmod{q}$. See
Table~\ref{table_6necklaces} for the period, trace and Lyndon
subword of each binary necklace of length $6$.

\begin{table}
\begin{center}\begin{tabular}{|c|c|c||c|c|c|}
  \hline
Necklaces of $A^6_0$ & Period & Lyndon subword & Necklaces of $A^6_1$ & Period & Lyndon subword \\
    \hline
    $[ 000000 ]$ & 1  & 0        &    $[ 000001 ]$ & 6  & 000001\\
    $[ 000011 ]$ & 6  & 000011   &    $[ 000111 ]$ & 6  & 000111\\
    $[ 000101 ]$ & 6  & 000101   &    $[ 001011 ]$ & 6  & 001011\\
    $[ 001001 ]$ & 3  & 001      &    $[ 001101 ]$ & 6  & 001101\\
    $[ 001111 ]$ & 6  & 001111   &    $[ 010101 ]$ & 2  & 01\\
    $[ 010111 ]$ & 6  & 010111   &    $[ 011111 ]$ & 6  & 011111\\
    $[ 011011 ]$ & 3  & 011      & & & \\
    $[ 111111 ]$ & 1  & 1        & & & \\
\hline
\end{tabular}\end{center}
\caption{The binary necklaces of length $6$}\label{table_6necklaces}
\end{table}

By \cite[Theorem~1.2]{Ruskey} the number of Lyndon words of length
$n$ with trace $t\pmod{q}$ over the alphabet $\{0,1,\ldots,q-1\}$ is
given by
\begin{equation}\label{eqn_L}
L_q(n,t)=\frac{1}{qn}\sum_{\substack{d|n\\
\gcd(d,q)|t}}\gcd(d,q)\mu(d)q^{n/d},
\end{equation}
where $\mu$ is the M{\" o}bius function. \end{appendix}


\begin{thebibliography}{15}

\bibitem{ARR} S. Adji, I. Raeburn and R. Rosjanuardi, \textit{Group extensions and the primitive ideal spaces of group algebras}, Glasgow Math. J. \textbf{49} (2007), 81--92.



\bibitem{DN} N. Dang Ngoc, \emph{Produits crois\'es restreintes et extensions de groupes}, Mimeographed notes, Paris, 1977.

\bibitem{DPY} K. R. Davidson, S. C. Power and D. Yang, \textit{Atomic representations of rank 2 graph algebras}, J. Funct. Anal. \textbf{255} (2008), 819--853.

\bibitem{DY} K. R. Davidson and D. Yang, \textit{Periodicity in rank 2 graph algebras}, Canad. J. Math., to appear; arXiv:0705.4499[math.OA].

\bibitem{Ech} S. Echterhoff, \textit{On induced covariant systems}, Proc. Amer. Math. Soc. \textbf{108} (1990), 703--706.


\bibitem{E} D. G. Evans, \textit{On the $K$-theory of higher rank graph $C^*$-algebras}, New York J. Math. \textbf{14} (2008), 1--31.

\bibitem{FPS} C. Farthing, D. Pask and A. Sims, \textit{Crossed products of $k$-graph algebras by $\Z^l$}, Houston J. Math. \textbf{35} (2009), 903--933.




\bibitem{FR} N. J. Fowler and I. Raeburn, \textit{The Toeplitz algebra of a Hilbert bimodule}, Indiana Univ. Math. J. \textbf{48} (1999), 155--181.

\bibitem{G} P. Green, \emph{The local structure of twisted covariance algebras}, Acta Math. \textbf{140} (1978), 191--250.

\bibitem{KT}Y. Katayama and H. Takehana, \textit{On automorphisms of generalized Cuntz algebras}, Internat. J. Math. \textbf{9} (1998), 493--512.

\bibitem{Kish}A. Kishimoto, \textit{Outer automorphisms and reduced crossed products of simple $\cstar$-algebras}, Comm. Math. Phys. \textbf{81} (1981), 429--435.

\bibitem{KitS4}B. Kitchens and K. Schmidt, \textit{Mixing sets and relative entropies for higher dimensional Markov
shifts}, Ergod. Th. Dynam. Sys. \textbf{13} (1993), 705--735.

\bibitem{KP}A. Kumjian and D. Pask, \textit{Higher rank graph
$C^\ast$-algebras}, New York J. Math. \textbf{6} (2000), 1--20.

\bibitem{Lyndon}R. C. Lyndon, \textit{On Burnside Problem I}, Trans. Amer. Math. Soc. \textbf{77} (1954), 202--215.

\bibitem{MR} D. Marelli and I. Raeburn, \emph{Proper actions which are not saturated}, Proc. Amer. Math. Soc. \textbf{137} (2009), 2273--2283.

\bibitem{OP}D. Olesen and G. K. Pedersen, \textit{Partially inner $\cstar$-dynamical systems}, J. Funct. Anal. \textbf{66} (1986), 263--281.

\bibitem{PRRS}D. Pask, I. Raeburn, M. R{\o}rdam and A. Sims, \textit{Rank-two graphs whose $\cstar$-algebras are direct limits of circle
algebras}, J. Funct. Anal. \textbf{239} (2006), 137--178.

\bibitem{PRW}D. Pask, I. Raeburn and N. A. Weaver, \textit{A family of
$2$-graphs arising from two-dimensional sub\-shifts}, Ergod. Th. Dynam. Sys. \textbf{29} (2009), 1613--1639.

\bibitem{PaskRho}D. Pask and S.-J. Rho, \emph{Some intrinsic properties of simple graph $C^*$-algebras}, Proceedings of the Conference on Operator Algebras and Mathematical Physics (Constan\c za, 2001), Theta Foundation, Bucharest, 2003, pages~325--340.

\bibitem{PV2}M. V. Pimsner and D. Voiculescu, \textit{Exact sequences for $K$-groups and Ext-groups of certain cross-products of $\cstar$-algebras},
J. Operator Theory \textbf{4} (1980), 93--118.

\bibitem{R}I. Raeburn, Graph Algebras, CBMS Regional Conference Series
in Mathematics, vol. 103, Amer. Math. Soc., Providence, 2005.

\bibitem{RSY03}I. Raeburn, A. Sims and T. Yeend, \textit{Higher-rank graphs and
their $C^\ast$-algebras}, Proc. Edinb. Math. Soc. \textbf{46}
(2003), 99--115.

\bibitem{RW}I. Raeburn and D. P. Williams, Morita Equivalence and Continuous-Trace $C^*$-Algebras, Mathematical Surveys and Monographs, vol.~60,
Amer. Math. Soc., Providence, 1998.

\bibitem{proper}
M.A. Rieffel, \emph{Proper actions of groups on {$C\sp *$}-algebras}, Mappings
of Operator Algebras, Progress in Math., vol.~84,
Birkh\"auser, Boston, 1990, pages~141--182.

\bibitem{RobS}D. I. Robertson and A. Sims, \textit{Simplicity of $\cstar$-algebras associated to higher rank
graphs}, Bull. London Math. Soc. \textbf{39} (2007), 337--344.

\bibitem{RS2}G. Robertson and T. Steger, \textit{Affine buildings,
tiling systems and higher rank Cuntz-Krieger algebras}, J. reine
angew. Math.  \textbf{513} (1999), 115--144.

\bibitem{RLL}M. R{\o}rdam, F. Larsen and N. J. Lausten, An Introduction to $K$-Theory for $\cstar$-Algebras,
London Math. Soc. Student Texts, vol. 49, Cambridge Univ. Press,
2000.

\bibitem{RoScho}J. Rosenberg and C. Schochet, \textit{The K\"{u}nneth theorem and the universal coefficent theorem
for Kasparov's generalised $K$-functor}, Duke Math. J. \textbf{55}
(1987), 431--474.

\bibitem{Ruskey}F. Ruskey, C. R. Miers and J. Sawada, \textit{The number of
irreducible polynomials and Lyndon words with given trace}, SIAM J.
Discrete Math. \textbf{41} (2001), 240--245.

\bibitem{Schmidt} K. Schmidt, Dynamical Systems of Algebraic Origin, Progress in
Mathematics, vol. 128, Birkh\"{a}user Verlag, Basel, 1995.

\bibitem{SZ}A. Skalski and J. Zacharias, \textit{Entropy of shifts on higher-rank graph $C^*$-algebras}, Houston J. Math. \textbf{34} (2008), 269--282.

\bibitem{W} D. P. Williams, Crossed Products of $C^*$-Algebras, Mathematical Surveys and Monographs, vol.~134,
Amer. Math. Soc., Providence, 2007.

\end{thebibliography}
\end{document}